\numberwithin{equation}{section}
\def\Re{{\sf Re}\,}
\def\Im{{\sf Im}\,}
\newcommand{\D}{\mathbb D}
\newcommand{\R}{\mathbb R}
\newcommand{\Ha}{\mathbb H}
\newcommand{\C}{\mathbb C}
\def\Re{{\sf Re}\,}
\def\Im{{\sf Im}\,}
\def\Re{{\sf Re}\,}
\def\Im{{\sf Im}\,}
\def\Re{{\sf Re}\,}
\def\Im{{\sf Im}\,}
\def\1#1{\overline{#1}}
\def\2#1{\widetilde{#1}}
\def\3#1{\widehat{#1}}
\def\4#1{\mathbb{#1}}
\def\5#1{\frak{#1}}
\def\6#1{{\mathcal{#1}}}
\def\Re{{\sf Re}\,}
\def\Im{{\sf Im}\,}
\newcommand{\mcite}[1]{\csname b@#1\endcsname}
\theoremstyle{plain}
\def\ha{\omega}
\def\Re{{\sf Re}\,}
\def\Im{{\sf Im}\,}
\def\Arg{{\rm Arg}}
\newtheorem{theorem}{Theorem}[section]
\newtheorem{lemma}[theorem]{Lemma}
\newtheorem{proposition}[theorem]{Proposition}
\theoremstyle{definition}
\newtheorem{definition}[theorem]{Definition}
\theoremstyle{remark}
\newtheorem{remark}[theorem]{Remark}
\numberwithin{equation}{section}
\title[Asymptotic monotonicity ]{Asymptotic monotonicity of the orthogonal speed and rate of convergence for semigroups of holomorphic self-maps of the unit disc}
\author[F. Bracci]{Filippo Bracci$^\dag$}
\address{F. Bracci: Dipartimento di Matematica, Universit\`a di Roma ``Tor Vergata", Via della Ricerca
Scientifica 1, 00133, Roma, Italia.} \email{fbracci@mat.uniroma2.it}
\author[D. Cordella]{Davide Cordella$^\dag$}
 \address{D. Cordella: Dipartimento di Matematica, Universit\`a di Roma ``Tor Vergata", Via della Ricerca
Scientifica 1, 00133, Roma, Italia.} 
 \email{cordella@mat.uniroma2.it}
\author[M. Kourou]{Maria Kourou}
\address{M. Kourou: Department of Mathematics,
  University of W\"urzburg, Emil Fischer Strasse 40, 97074, W\"urzburg,
  Germany. {\sl Current Address:} Department of Mathematics,
  Aristotle University of Thessaloniki, 54124, Thessaloniki, Greece.} \email{mkouroue@math.auth.gr}
\subjclass[2010]{Primary 37C10, 30C35; Secondary 30D05, 30C80, 37F99, 37C25}
\keywords{Semigroups of holomorphic functions; hyperbolic geometry; dynamical systems}
\thanks{$^\dag$Partially supported by PRIN {\sl Real and Complex
Manifolds: Topology, Geometry and holomorphic dynamics} n.2017JZ2SW5, by INdAM and  by the MIUR Excellence Department Project awarded to the
Department of Mathematics, University of Rome Tor Vergata, CUP E83C18000100006}
\long\def\ReM#1{\relax}
\begin{document}

\selectlanguage{english}
\begin{abstract}
We show that the orthogonal speed of semigroups of holomorphic self-maps of the unit disc is asymptotically monotone in most cases. Such a theorem allows to generalize previous results of D. Betsakos and D. Betsakos, M. D. Contreras and S. D\'iaz-Madrigal and to obtain new estimates for the rate of convergence of orbits of semigroups.
\end{abstract}
\maketitle

\tableofcontents


\section{Introduction}

The theory of continuous semigroups of holomorphic self-maps of the unit disc $\D:=\{z\in \C: |z|<1\}$---or just, for short, {\sl semigroups in $\D$}---is a flourishing subject of study since the early nineteen century, both as a subject by itself and for many different applications, see, {\sl e.g.},   \cite{Ababook89, A1,    BerPor78,  BCDbook, EliShobook10,  Shobook01,  Sis98} and bibliography therein.

In this paper we are interested in considering the so-called ``rate of convergence'' of the orbits of a non-elliptic semigroup in $\D$ to its Denoy-Wolff point. Estimates for the rate of convergence of an orbit of a non-elliptic semigroup in $\D$ have been obtained in \cite{Bet, Bet2, BetCD, EJ, EKRS, A5, ElSh, A7}. 

In particular, in \cite{Bet}, D. Betsakos proved that if $(\phi_t)$ is a semigroup in $\D$ with Denjoy-Wolff point $\tau\in \partial \D$, then there exists a constant $K>0$ such that
\begin{equation}\label{Betsakos}
|\phi_t(0)-\tau|\leq K t^{-1/2}, \quad t\geq 0.
\end{equation}
The point $0$ can be easily replaced with any $z\in \D$. However,  the exponent $-1/2$ of $t$ is sharp, and can be  replaced with $-1$ in case $(\phi_t)$ is either hyperbolic or parabolic with positive hyperbolic step. 

In \cite[Thm. 5.3]{BetCD} (see also \cite[Thm. 16.3.1]{BCDbook}), D. Betsakos, M. D. Contreras and S. D{\'i}az-Madrigal, got an estimate of the previous type with the exponent $-1/2$ replaced by  $-\frac{\pi}{\alpha+\beta}$  in case the image of the Koenigs function of the semigroup is contained in a sector of the form $W_{\alpha, \beta}:=p+i\{re^{i\theta}: r>0, -\alpha<\theta<\beta\}$ with $\alpha, \beta\in [0,\pi]$ and $\alpha+\beta>0$ for some---completely irrelevant for this discussion---point $p\in \C$.

In \cite{Br} (see also \cite[Ch. 16]{BCDbook}), the first named author introduced three quantities, called {\sl speeds}, which are defined in intrinsic terms using the hyperbolic distance and showed that the previous estimates can be translated in terms of one of such speeds. To be more concrete, let $\tau\in\partial \D$ be the Denjoy-Wolff point of $(\phi_t)$. Let $\pi(\phi_t(0))\in (-1,1)\tau$ be the closest point to $\phi_t(0)$ in the sense of hyperbolic distance $k_\D$ in $\D$. For $t\geq 0$, we let
\[
v^o(t):=k_\D(0, \pi(\phi_t(0))),
\]
and call it the {\sl orthogonal speed} of $(\phi_t)$. It can be shown that $v^o(t)\sim -\frac{1}{2}\log |\tau-\phi_t(0)|$, and therefore \eqref{Betsakos} can be translated in 
\begin{equation}\label{Eq:hypBet}
\liminf_{t\to +\infty}[v^o(t)-\frac{1}{4}\log t]>-\infty,
\end{equation}
and, similarly, the estimate in \cite[Thm. 5.3]{BetCD} can be obtained by replacing $\frac{1}{4}$ by $ \frac{\pi}{2(\alpha+\beta)}$. 

Now,  in \cite[Prop. 6.5]{Br} (see also \cite[Cor. 16.2.6]{BCDbook}) it is proved that the orthogonal speed of a semigroup whose image under the Koenigs function is a sector $W_{\alpha,\beta}$, goes like $-\frac{\pi}{2(\alpha+\beta)}\log t$ as $t\to +\infty$. Therefore, \eqref{Eq:hypBet} and \cite[Thm. 5.3]{BetCD} can be rephrased as $\liminf_{t\to +\infty}[v^o(t)-w^o(t)]>-\infty$, where $w^0(t)$ is the orthogonal speed of the semigroup whose image under the Koenigs function is a sector $W_{\alpha,\beta}$. Hence, the following natural question was raised in \cite{Br} (see {\sl Question~4} in \cite[Sec. 8]{Br}):

\medbreak
{\sl Question:} Let $(\phi_t)$ and $(\tilde\phi_t)$ be non-elliptic semigroups in $\D$ with Koenigs functions $h$ and $\tilde h$, respectively, and denote by $v^o(t)$ ({\sl resp.} $\tilde v^o(t)$) the orthogonal speed of $(\phi_t)$  ({\sl resp.} $(\tilde\phi_t)$). Assume $h(\D)\subset \tilde h(\D)$. Is it true that 
\[
\liminf_{t\to +\infty}[v^o(t)-\tilde v^o(t)]>-\infty?
\]
In other words, is the orthogonal speed asymptotically monotone?
\medbreak

In this paper we give a (partial) affirmative answer to the previous question. In particular, we prove that if one replaces the $\liminf$ with $\limsup$, the answer is always yes.

\begin{theorem}\label{Thm:almost-asymp-mono}
Let $(\phi_t), (\tilde \phi_t)$ be non-elliptic semigroups in $\D$. Let $h$ ({\sl respectively}, $\tilde h$) be the Koenigs function of $(\phi_t)$ ({\sl resp.} of $(\tilde \phi_t)$). Suppose that $h(\D)\subset \tilde h(\D)$. Then 
\[
\limsup_{t\to +\infty}[v^o(t)-\tilde v^o(t)]>-\infty,
\]
or, equivalently,
\[
\liminf_{t\to +\infty}\frac{|\phi_t(0)-\tau|}{|\tilde\phi_t(0)-\tilde\tau|}<+\infty,
\]
where $\tau\in \partial \D$ is the Denjoy-Wolff point of $(\phi_t)$ and $	\tilde\tau\in \partial \D$ is the Denjoy-Wolff point of $(\tilde\phi_t)$.
\end{theorem}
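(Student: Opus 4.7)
The plan is to exploit the intertwining between the two semigroups coming from the inclusion of Koenigs domains. Define
\[
F := \tilde h^{-1}\circ h \colon \D \to \D,
\]
which is a well-defined holomorphic self-map because $h(\D)\subset\tilde h(\D)$. Using the Abel equations $h\circ\phi_t = h+it$ and $\tilde h\circ\tilde\phi_t = \tilde h+it$ one immediately verifies the conjugacy
\[
F\circ\phi_t = \tilde\phi_t\circ F,\qquad t\geq 0.
\]
In particular, $F(\phi_t(0)) = \tilde\phi_t(F(0))$ is itself a $(\tilde\phi_t)$-orbit, and the Schwarz-Pick lemma applied to $\tilde\phi_t$ at the points $0$ and $F(0)$ gives
\[
k_\D\bigl(\tilde\phi_t(0),\,F(\phi_t(0))\bigr) \leq k_\D(0,F(0)),
\]
a bound uniform in $t$.

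The next step is to analyse the boundary behavior of $F$ at $\tau$. Since the curve $t\mapsto\phi_t(0)$ lands at $\tau$ and $F$ along this curve converges to $\tilde\tau$, a Lindel\"of-type argument gives the non-tangential boundary value $\angle\lim_{z\to\tau}F(z) = \tilde\tau$. One then applies the Julia-Wolff-Carathéodory theorem at $\tau$: the Julia quotient $\alpha_F := \liminf_{z\to\tau}(1-|F(z)|)/(1-|z|)\in(0,+\infty]$ is well-defined and, whenever $\alpha_F<+\infty$, the Julia inequality
\[
\frac{|\tilde\tau - F(z)|^2}{1-|F(z)|^2}\leq \alpha_F\,\frac{|\tau-z|^2}{1-|z|^2},\qquad z\in\D,
\]
holds throughout the disc.

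Using the asymptotic $v^o(t)\sim -\tfrac12\log|\tau-\phi_t(0)|$ mentioned in the introduction (and its analogue for $\tilde v^o$), the theorem is equivalent to exhibiting a sequence $t_n\to+\infty$ along which $|\tilde\tau-\tilde\phi_{t_n}(0)| \geq c\,|\tau-\phi_{t_n}(0)|$ for some positive $c$. The uniform Schwarz-Pick bound above implies that $\tilde\phi_t(0)$ and $F(\phi_t(0))$ have comparable $1-|\cdot|$ and therefore comparable Euclidean distances to $\tilde\tau$ \emph{provided they approach $\tilde\tau$ non-tangentially}. Whenever this is the case, Julia's inequality applied to $F$ then supplies the required bound with $c$ of order $1/\alpha_F$, finishing the proof in the hyperbolic and parabolic positive hyperbolic step cases.

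The crux—and the reason the argument produces only the $\limsup$ statement rather than the stronger $\liminf$ answer to the open question in \cite{Br}—is the parabolic zero hyperbolic step case, in which $\phi_t(0)$ approaches $\tau$ tangentially for large $t$, the horocyclic quantity $|\tau-\phi_t(0)|^2/(1-|\phi_t(0)|^2)$ is unbounded, and Julia's inequality loses its force uniformly in $t$. The main work is to construct a subsequence of times $t_n\to+\infty$ along which $\phi_{t_n}(0)$ lies inside some fixed Stolz angle at $\tau$ (equivalently, $|\tau-\phi_{t_n}(0)| \asymp 1-|\phi_{t_n}(0)|$), and then to run the non-tangential argument of the previous paragraph along that subsequence. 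Producing such a subsequence from the mere inclusion $h(\D)\subset\tilde h(\D)$, rather than a pointwise-in-$t$ control, is the subtle point of the proof and is exactly what separates the $\limsup$ conclusion from a potential $\liminf$ conclusion.
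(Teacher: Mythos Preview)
Your proposal is not a proof: you explicitly identify ``the main work'' (constructing a subsequence $t_n\to\infty$ with $\phi_{t_n}(0)$ inside a fixed Stolz angle at~$\tau$) and then do not carry it out. Worse, such a subsequence need not exist. For a parabolic semigroup of positive hyperbolic step the orbit $\phi_t(0)$ converges to $\tau$ \emph{strictly} tangentially---it eventually leaves every Stolz angle and never returns---so your scheme cannot even begin. Your classification of this case as falling under the ``non-tangential'' argument is therefore mistaken. There is also no justification that the Julia quotient $\alpha_F$ is finite; and even granting the Stolz subsequence, note that Julia's inequality gives an \emph{upper} bound on $|\tilde\tau-F(z)|$ in terms of $|\tau-z|$, whereas you need a \emph{lower} bound on $|\tilde\tau-\tilde\phi_t(0)|$. (In fact, if one had the Stolz subsequence, Schwarz--Pick alone would suffice: $|\tau-\phi_{t_n}(0)|\asymp 1-|\phi_{t_n}(0)|\lesssim 1-|\tilde\phi_{t_n}(0)|\le |\tilde\tau-\tilde\phi_{t_n}(0)|$; Julia is a red herring.)

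The paper takes a completely different route, based on harmonic measure rather than Julia--Wolff--Carath\'eodory. Working in $\Ha$ via the Cayley transform and writing $C(\tilde\phi_t(0))=\tilde\rho_t e^{i\tilde\theta_t}$, one first shows (Lemma~\ref{lem1}) that $\tilde v^o(t)\sim -\tfrac12\log\omega(1,\tilde\Theta_t,\Ha)$ with $\tilde\Theta_t=\{iy:|y|\ge\tilde\rho_t\}$. The key subsequence is obtained not from any Stolz condition but from the elementary fact that, since $\tilde\rho_t\to\infty$, there always exist times $t_n\to\infty$ at which $\tilde\rho_{t_n}=\min_{s\ge t_n}\tilde\rho_s$ (Lemma~\ref{lem:good-sequence}); at such times one gets $\omega(\tilde\rho_s e^{i\tilde\theta_s},\tilde\Theta_{t_n},\Ha)\ge\tfrac12$ for all $s\ge t_n$ (Lemma~\ref{lem:good-estim-increase}). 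Combining this lower bound with Gaier's theorem (Lemma~\ref{Lem:Hall}) and the Strong Markov property (Lemma~\ref{Lem:reverse}), the fundamental Lemma~\ref{Lemma:final} yields $v^o(t_n)-\tilde v^o(t_n)\ge H$ for a uniform constant $H$, which is the $\limsup$ statement.
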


Also, we are able to provide a (complete) affirmative answer to the question in many cases:

\begin{theorem}\label{Thm:main2}
Let $(\phi_t), (\tilde \phi_t)$ be non-elliptic semigroups in $\D$. Let $h$ ({\sl respectively}, $\tilde h$) be the Koenigs function of $(\phi_t)$ ({\sl resp.} of $(\tilde \phi_t)$). Suppose that $h(\D)\subset \tilde h(\D)$ and that 
\begin{enumerate}
\item either $h(\D)$ is quasi-symmetric with respect to vertical axes,
\item or, $\tilde h(\D)$ is quasi-symmetric with respect to vertical axes,
\item or, $\tilde h(\D)$ is starlike with respect to some $w_0\in \tilde h(\D)$.
\end{enumerate}
 Then 
\[
\liminf_{t\to +\infty}[v^o(t)-\tilde v^o(t)]>-\infty,
\]
or, equivalently, there exists $K>0$ such that for all $t\geq 0$,
\[
|\phi_t(0)-\tau|\leq K |\tilde\phi_t(0)-\tilde \tau|,
\]
where $\tau\in \partial \D$ is the Denjoy-Wolff point of $(\phi_t)$ and $	\tilde\tau\in \partial \D$ is the Denjoy-Wolff point of $(\tilde\phi_t)$.
\end{theorem}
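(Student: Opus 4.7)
My plan is to transfer the problem to the Koenigs images $\Omega := h(\D)$ and $\tilde\Omega := \tilde h(\D)$, where the semigroups become the horizontal translation $w\mapsto w+t$ and the orbits of $0$ trace out the rays $h(0)+t$ and $\tilde h(0)+t$. Using Koebe-type distortion for $h$ at the Denjoy--Wolff point $\tau$, I would rewrite $v^o(t)$ (up to an $O(1)$ additive error) as half the logarithm of the reciprocal of a suitable transverse width $\delta_\Omega^\perp(h(0)+t)$ of $\Omega$ at horizontal coordinate $\Re h(0)+t$, and analogously for $\tilde v^o(t)$. The equivalence of the two formulations in the statement then follows from the standard asymptotic $v^o(t)=-\tfrac12\log|\tau-\phi_t(0)|+O(1)$ for non-elliptic semigroups, so the problem reduces to a geometric comparison of the width functions $\delta_\Omega^\perp$ and $\delta_{\tilde\Omega}^\perp$ along the horizontal direction.

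Because Theorem~\ref{Thm:almost-asymp-mono} already supplies the $\limsup$ version, what I actually need is to rule out arbitrarily deep local dips of $v^o-\tilde v^o$; each of the three hypotheses is designed to do exactly this. For case~(3), starlikeness of $\tilde\Omega$ with respect to $w_0$ forces the transverse width $\delta_{\tilde\Omega}^\perp(w_0+s)$ to be essentially monotone in $s$, so $\tilde v^o$ cannot oscillate wildly; combined with the inclusion $\Omega\subset\tilde\Omega$ and the hyperbolic monotonicity given by Schwarz--Pick, the $\liminf$ conclusion follows, with the sector asymptotic of \cite[Cor.~16.2.6]{BCDbook} as the natural comparison tool. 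For cases~(1) and (2), I would exploit quasi-symmetry to perform a Steiner-type symmetrization along vertical axes of $h(\D)$ or $\tilde h(\D)$, replacing the domain by a vertically symmetric one with uniformly controlled distortion of the hyperbolic distance; in the symmetrized picture the orbit lies on the axis of symmetry, so the inclusion translates directly, via Schwarz--Pick, into the monotonicity of the orthogonal speeds up to a bounded additive constant.

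The main obstacle, I expect, is cases~(1) and (2): unlike starlikeness, which gives a pointwise monotone width, quasi-symmetry only controls the \emph{shape} of each horizontal slice, not its location. Converting this into a quantitative bound on the hyperbolic distance along the orbit requires a careful two-dimensional harmonic-measure or extremal-length estimate, in the spirit of the proof of \cite[Thm.~5.3]{BetCD}, and it is this passage from the purely geometric quasi-symmetric hypothesis to the intrinsic orthogonal-speed bound that seems to me the most delicate step of the proof.
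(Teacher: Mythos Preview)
Your plan has a genuine gap at its foundation: the proposed reduction of $v^o(t)$ to ``half the logarithm of the reciprocal of a transverse width $\delta_\Omega^\perp$'' is not available in general. The orthogonal speed is an intrinsic hyperbolic quantity in $\D$ (equivalently in $\Ha$ via the Cayley transform), and there is no known identity of the form $v^o(t)=-\tfrac12\log\delta^\perp_{h(\D)}(h(0)+it)+O(1)$ for arbitrary starlike-at-infinity domains; such a relation holds only for very special shapes (e.g.\ the explicit examples of Section~\ref{sec:appl}). The paper's substitute for this is Proposition~\ref{Prop:harmonic-measure-vo}, which expresses $v^o(t)$ up to $O(1)$ in terms of a \emph{harmonic measure} $\omega(1,\Theta_t,\Ha)$, and then compares harmonic measures via Gaier's (Hall's) inequality (Lemma~\ref{Lem:Hall}) and the Strong Markov Property (Lemma~\ref{Lem:reverse}); these are combined in the fundamental Lemma~\ref{Lemma:final}. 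Your Steiner-symmetrization outline for cases~(1)--(2) would, at best, have to reproduce this harmonic-measure machinery.

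There is also a second gap in how you handle the quasi-symmetric hypotheses. The paper does \emph{not} symmetrize; instead it invokes the equivalence, proved in \cite{BCGDZ}, between ``$h(\D)$ quasi-symmetric with respect to vertical axes'' and ``$\phi_t(0)\to\tau$ non-tangentially''. With that in hand, case~(1) is elementary: non-tangential convergence of $(\phi_t)$ bounds $v^T$, so \eqref{Pyt} gives $v^o(t)=v(t)+O(1)$, and $v(t)\ge\tilde v(t)+O(1)\ge\tilde v^o(t)+O(1)$ by the inclusion and \eqref{Pyt} again. Case~(2) goes through Lemma~\ref{Lemma:non-tg} to verify the harmonic-measure hypothesis \eqref{Eq:Lemma:final} of Lemma~\ref{Lemma:final}. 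For case~(3), the mechanism is not a ``monotone transverse width'' but the fact (Proposition~\ref{Prop:non-decr-v-st}) that starlikeness of $\tilde h(\D)$ at $w_0$ forces $t\mapsto\tilde v(t)$ to be non-decreasing (because hyperbolic discs centered at $\tilde h(0)$ are themselves starlike), whence by Lemma~\ref{Lem:non-dec-tot-ort} the orthogonal speed $\tilde v^o$ is non-decreasing, which again feeds into Lemma~\ref{Lemma:final}. (Incidentally, the Koenigs linearization is the \emph{vertical} translation $w\mapsto w+it$, not the horizontal one you wrote.)
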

Here, we say that a starlike at infinity domain $\Omega$ is {\sl quasi-symmetric with respect to vertical axes} if there exists $K>0$ such that $K^{-1}\delta^-(t)\leq \delta^+(t)\leq K\delta^-(t)$, for all $t\geq 0$, where for some $z_0\in \Omega$, we denote by 
\begin{equation*}
\begin{split}
\tilde\delta^+(t)&:=\inf\{|w-(z_0+it)|: \Re w\geq \Re z_0, w\in \C\setminus \Omega\}, \\
\tilde\delta^-(t)&:=\inf\{|w-(z_0+it)|: \Re w\leq \Re z_0, w\in \C\setminus \Omega\},
\end{split}
\end{equation*}
and $\delta^\pm(t):=\min\{t, \tilde\delta^\pm(t)\}$. 

It was proved in \cite{BCGDZ} that $h(\D)$ is quasi-symmetric with respect to vertical axes if and only if $(\phi_t(0))$ converges non-tangentially to the Denjoy-Wolff point. 

Condition (3) in Theorem \ref{Thm:main2} is clearly satisfied by the sectors of type $W_{\alpha,\beta}$, with $\alpha, \beta\in [0,\pi]$ and $\alpha+\beta>0$, hence, our theorem generalizes the results in \cite{Bet} and \cite[Thm 5.3]{BetCD}.

In \cite[Thm. 5.6]{BetCD}, the authors get some estimates in the case where $h(\D)=W_{\alpha,\beta}$, with $\alpha, \beta\in [0,\pi]$ and $\alpha+\beta>0$. Indeed, they prove that, 
\begin{enumerate}
\item if $\alpha, \beta>0$, then there exists $K>0$ such that $|\tilde\phi_t(0)-\tau|\geq K t^{-1/(\alpha+\beta)}$ for all $t\geq 0$, 
\item if either $\alpha=0$ or $\beta=0$, then there exists $K>0$ such that $|\tilde\phi_t(0)-\tau|\geq K t^{-1-1/(\alpha+\beta)}$, for all $t\geq 0$.
\end{enumerate}
Now, if $\alpha, \beta>0$, then $h(\D)=W_{\alpha,\beta}$ is quasi-symmetric with respect to vertical axes and then the result can be obtained also from Theorem~\ref{Thm:main2} (and the explicit computation of the orthogonal speed of $(\phi_t)$). While, if either $\alpha=0$ or $\beta=0$, the picture does not enter into the hypotheses of Theorem~\ref{Thm:main2} because $h(\D)$ is not quasi-symmetric with respect to the vertical axes and we have no information on $\tilde h(\D)$. However, the estimate (2) in Theorem \ref{Thm:main2} is {\sl not} a relation between the orthogonal speeds of $(\phi_t)$ and $(\tilde\phi_t)$ (but between the orthogonal speeds of $(\tilde\phi_t)$ and the {\sl total speed} of $(\phi_t)$), and can be also obtained by the methods illustrated in this paper (see Remark~\ref{Rem:BetCD}).

The proof of Theorem~\ref{Thm:main2} is based on harmonic measure theory. Suppose $\Omega\subsetneq\C$ is a simply connected domain. The harmonic measure at a point $w\in \Omega$ with respect to $D\subset\partial \Omega$ is denoted by $\ha \left(w, D,\Omega \right)$. In Proposition~\ref{Prop:harmonic-measure-vo}, we prove that
there exists a constant $K>0$ such that for all $t\geq 1$,
\[
|v^0(t)+\frac{1}{2} \log  \ha \left(0, A_t , \D \right)|\leq K,
\]
where $A_t$ is defined as follows.
For $t\geq 1$, let $a_t\in\partial \D\cap \{\Im z>0\}$ be the intersection of $\partial \D$ with the circle containing $\overline{\tau}\phi_t(0)$, orthogonal to $(-1,1)$ and orthogonal to $\partial \D$ at $a_t$. Then let $\tilde A_t\subset \partial \D$ be the closed arc containing $1$ with end points $a_t$ and $\overline{a_t}$. Define $A_t:=\tau \tilde A_t$.

Then, in Section~\ref{Sec:estimates}, we give some estimates of harmonic measures, based on Gaier's Theorem and the Strong Markov Property. With these tools at hand, in the fundamental Lemma~\ref{Lemma:final}, we show the (almost) monotonicity of the orthogonal speed, in the case where a certain harmonic measure along the orbit of the semigroup is bounded from below by zero. This lemma allows us to prove Theorem~\ref{Thm:almost-asymp-mono} and Theorem~\ref{Thm:almost-asymp-mono1}, which is a more general version  of Theorem~\ref{Thm:main2} (and, from which, Theorem~\ref{Thm:main2} follows). In Section~\ref{sec:appl}, we give some applications of our results. In particular, we discuss the rate of convergence in case the image of the Koenigs function contains/is contained in domains of type  $\Pi_\alpha:=\{z\in \mathbb C: \Im z>|\Re z|^\alpha\}$ for $\alpha>1$ and of type $\Xi(\alpha,\theta):=(-\overline{\Ha}\cap\Pi_{\alpha})\cup W(\theta)$, where $W(\theta):=\left\lbrace z\in \C\mid\arg (z)\in\left(\frac{\pi}{2}-\theta,\frac{\pi}{2}\right) \right\rbrace$. 

We end the paper with Section~\ref{sec:open} containing some open questions originating from this work.

\medskip

We thank the referee for several comments which improved the original manuscript.

\section{Semigroups in the unit disc}

In this section we briefly recall the basics of the theory of semigroups of holomorphic self-maps of the unit disc, as needed for our aims. We refer the reader to the books \cite{Ababook89, BCDbook, EliShobook10, Shobook01} for details.

\begin{definition}
A {\sl continuous semigroup $(\phi_t)$ of holomorphic self-maps of $\D$}, or just a {\sl semigroup in $\D$} for short, is a semigroup homeomorphism between the semigroup of real non-negative numbers (with respect to  sum) and the semigroup of holomorphic self-maps of $\D$ (with respect to composition). Here, as usual,  the chosen topology for  $\R^+$  is the Euclidean topology and the space of holomorphic self-maps of $\D$ is endowed with the topology of uniform convergence on compacta. \end{definition}

A semigroup $(\phi_t)$ without fixed points in $\D$ is called {\sl non-elliptic}. If $(\phi_t)$ is a non-elliptic semigroup, $\phi_t$ has the same {\sl Denjoy-Wolff point} $\tau\in \partial \D$, for all $t>0$. Moreover, $\lim_{t\to+\infty}\phi_t(z)=\tau\in \partial \D$, for all $z\in \D$. 

Let $(\phi_t)$ be a non-elliptic semigroup in $\D$. Up to conjugate with a rotation, we can assume that the Denjoy-Wolff point of $(\phi_t)$ is $1$. The Denjoy-Wolff Theorem (see, {\sl e.g.} \cite[Thm. 1.8.4]{BCDbook}) implies that 
\begin{equation}\label{Eq:DW}
\phi_t(E(1,R))\subseteq E(1,R),
\end{equation}
for all $t\geq 1$ and $R>0$, where $E(1,R):=\{z\in \D: |1-z|^2<R (1-|z|^2)\}$. 
 
Let us denote the right half-plane by
\[
\Ha:=\{w\in \C: \Re w>0\}
\]
and let $C:\D \to \Ha$ be the Cayley transform defined by $C(z)=(1+z)/(1-z)$. Then \eqref{Eq:DW} implies that for all $s\geq t\geq 0$,
\begin{equation}\label{Eq:DW-Ha}
 \Re (C(\phi_s(0)))\geq \Re (C(\phi_t(0))).
 \end{equation}
This is, in fact, the Denjoy-Wolff Theorem version in $\Ha$ (see also \cite[Thm. 1.7.8]{BCDbook}).

If $(\phi_t)$ is a non-elliptic semigroup in $\D$, then there exists a (essentially unique) univalent function $h:\D \to \C$ such that
\begin{enumerate}
\item $h(\phi_t(z))=h(z)+it$ for all $z\in \D$, $t\geq 0$,
\item $\bigcup_{t\geq 0}(h(\D)-it)=\Omega$, where $\Omega$ is either a vertical strip, or a vertical half-plane or $\C$.
\end{enumerate}
The function $h$ is called the {\sl Koenigs function} of $(\phi_t)$.


\section{Speeds of semigroups}

Speeds of non-elliptic semigroups in $\D$ have been introduced in \cite{Br} (see also \cite[Ch. 16]{BCDbook}). We recall here the basic facts needed.

Let $\tau\in \partial \D$ and let $\Gamma:=(-1,1)\tau$. Then $\Gamma$ is a geodesic for the hyperbolic distance $k_\D$ in $\D$. For every $z\in \D$, there exists a  unique point, $\pi(z)\in \Gamma$ such that
\[
k_\D(z,\pi(z))=\min\{k_\D(z,w): w\in \Gamma\}.
\]

\begin{definition}
Let $(\phi_t)$ be a non-elliptic semigroup in $\D$ with Denjoy-Wolff point $\tau\in \partial \D$. The {\sl (total) speed} $v(t)$ of $(\phi_t)$ is
\[
v(t):=k_\D(0,\phi_t(0)), \quad t\geq 0.
\]
The {\sl orthogonal speed} $v^o(t)$ of $(\phi_t)$ is
\[
v^o(t):=k_\D(0,\pi(\phi_t(0))), \quad t\geq 0.
\]
The {\sl tangential speed} $v^T(t)$ of $(\phi_t)$ is
\[
v^T(t):=k_\D(\phi_t(0),\pi(\phi_t(0))), \quad t\geq 0.
\]
\end{definition}

As a consequence of  the ``Pythagoras' Theorem in hyperbolic geometry'', we have the following relation for all $t\geq 0$ (see \cite[eq. (16.1.2)]{BCDbook} or \cite[eq. (5.2)]{Br})
\begin{equation}\label{Pyt}
v^o(t)+v^T(t)-\frac{1}{2}\log 2\leq v(t)\leq v^o(t)+v^T(t).
\end{equation}
Also, as a consequence of the Julia's Lemma and \eqref{Pyt} (see, \cite[eq. (16.1.3)]{BCDbook} or \cite[eq. (5.3)]{Br})
\begin{equation}\label{dip-speed}
v^T(t)\leq v^o(t)+4\log 2.
\end{equation}

Moreover, the speeds of a semigroup are related to certain quantities, whose asymptotic estimates go under the name ``rate of convergence'' of a semigroup. For all $t\geq 0$, we have
\begin{equation}\label{Eq:iper-eucl}
\begin{split}
&\left|v(t)-\frac{1}{2} \log\frac{1}{1-|\phi_t(0)|}\right|\leq \frac{1}{2}\log 2,\\
&\left|v^o(t)-\frac{1}{2} \log\frac{1}{|\tau-\phi_t(0)|}\right|\leq \frac{1}{2}\log 2,\\
&\left|v^T(t)-\frac{1}{2} \log\frac{|\tau-\phi_t(0)|}{1-|\phi_t(0)|}\right|\leq \frac{3}{2}\log 2.
\end{split}
\end{equation}

Since the definition of the speeds is given in hyperbolic terms, the speeds are invariant under conformal changes of coordinates. In particular, one can check that if $(\phi_t)$ is a non-elliptic semigroup in $\D$ with Denjoy-Wolff point $1$ and $C:\D \to \Ha$ is the Cayley transform $C(z)=(1+z)/(1-z)$, then (see \cite[eq. (5.1)]{Br} or \cite[Sec. 6.5]{BCDbook}) the orthogonal speed of $(\phi_t)$ is  
\begin{equation}\label{Eq:ort-speed-inH}
v^{o}(t)= k_{\Ha} (1, \rho_t)=\frac{1}{2}\log \rho_t,
\end{equation}
where we let $C(\phi_t(0))=\rho_t e^{i\theta_t}$ for some $\rho_t>0$ and $\theta_t\in (-\frac{\pi}{2}, \frac{\pi}{2})$, $t\geq 0$.

In particular, by \eqref{Pyt} and \eqref{dip-speed}, we have 
\[
v(t)\leq 2 v^o(t)+4\log 2.
\]
Since $v(t)\to +\infty$, as $t\to +\infty$, (because $\phi_t(0)\to \tau\in \partial \D$), it follows that $\lim_{t\to+\infty}v^o(t)=+\infty$ and, in particular,  $\lim_{t\to +\infty}\rho_t=+\infty$.

\begin{remark}\label{Rem:BetCD}
Let $(\phi_t), (\tilde \phi_t)$ be non-elliptic semigroups in $\D$. Let $h$ ({\sl respectively}, $\tilde h$) be the Koenigs function of $(\phi_t)$ ({\sl resp.} of $(\tilde \phi_t)$). Let $v(t), v^o(t)$ and $\tilde v(t), \tilde v^o(t)$ denote the total and the orthogonal speeds of $(\phi_t)$ and $(\tilde\phi_t)$, respectively. Suppose that $h(\D)\subset \tilde h(\D)$. Then clearly $v(t)\leq \tilde v(t)$. Moreover, by \eqref{Pyt},
\[
\tilde v^o(t)\leq \tilde v(t)+\frac{1}{2}\log 2\leq  v(t)+\frac{1}{2}.
\] 
Hence,
\[
\liminf_{t\to +\infty} [v(t)-\tilde v^o(t)]>-\infty.
\]
For instance, if $h(\D)=W_{\alpha,\beta}$,  with $\alpha, \beta\in [0,\pi]$ and $\alpha+\beta>0$, but either $\alpha=0$ or $\beta=0$, then $v(t)\sim \frac{\pi+\alpha+\beta}{2(\alpha+\beta)}\log t$ (see \cite[Prop. 6.5]{Br} or \cite[Cor. 16.2.6]{BCDbook}). From this, condition (2) of \cite[Thm. 5.6]{BetCD} follows. 

It is presently unknown if we can replace $\frac{\pi+\alpha+\beta}{2(\alpha+\beta)}\log t$ with (the more natural) estimate $v^o(t)\sim \frac{\pi}{2(\alpha+\beta)}\log t$.
\end{remark}

In the final part of this section, we give some geometric conditions on the image of the Koenigs function of a semigroup, which assures that $v^o(t)$ is a non-decreasing function of $t$.

\begin{lemma}\label{Lem:non-dec-tot-ort}
Let $(\phi_t)$ be a non-elliptic semigroup in $\D$. Suppose that $v(t_2)\geq v(t_1)$, for some $t_2\geq t_1\geq 0$. Then $v^o(t_2)\geq v^o(t_1)$. \end{lemma}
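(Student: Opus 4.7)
The plan is to transfer everything to the right half-plane $\Ha$ via the Cayley transform $C$, where both speeds admit transparent algebraic expressions. After a rotation I may assume $\tau=1$; setting $w_t:=C(\phi_t(0))=\rho_t e^{i\theta_t}$ and $x_t:=\Re w_t$, formula \eqref{Eq:ort-speed-inH} gives $v^o(t)=\tfrac12\log\rho_t$. Since $w_0=1$, applying the Denjoy--Wolff inequality \eqref{Eq:DW-Ha} at $s=t$ and $t=0$ yields $x_t\geq 1$ for every $t\geq 0$, in particular $\rho_t\geq 1$, so the expression for $v^o(t)$ carries no hidden absolute value and is genuinely monotone in $\rho_t$.

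The heart of the argument is the closed-form identity
\begin{equation*}
\cosh\bigl(2v(t)\bigr)\;=\;\frac{\rho_t^{\,2}+1}{2\,x_t},
\end{equation*}
which I would extract from the standard half-plane formula $\tanh v(t)=\tanh k_{\Ha}(1,w_t)=|1-w_t|/|1+w_t|$ together with the elementary identities $|w_t\pm 1|^2=\rho_t^{\,2}\pm 2x_t+1$ and $\cosh(2v)=(1+\tanh^2 v)/(1-\tanh^2 v)$; the computation is a few lines. Rewritten as $\rho_t^{\,2}+1=2\,x_t\cosh(2v(t))$, this identity neatly separates the quantity to be controlled ($\rho_t$) from the two pieces ($x_t$ and $v(t)$) directly governed by the hypotheses.

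With the identity at hand the lemma is immediate. By \eqref{Eq:DW-Ha} one has $x_{t_2}\geq x_{t_1}>0$; by hypothesis $v(t_2)\geq v(t_1)\geq 0$, and $\cosh$ is non-decreasing on $[0,+\infty)$, so $\cosh(2v(t_2))\geq \cosh(2v(t_1))\geq 1$. Multiplying these two inequalities between positive numbers gives
\begin{equation*}
\rho_{t_2}^{\,2}+1\;=\;2\,x_{t_2}\cosh\bigl(2v(t_2)\bigr)\;\geq\;2\,x_{t_1}\cosh\bigl(2v(t_1)\bigr)\;=\;\rho_{t_1}^{\,2}+1,
\end{equation*}
hence $\rho_{t_2}\geq\rho_{t_1}$ and therefore $v^o(t_2)\geq v^o(t_1)$.

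There is no serious obstacle once the correct change of coordinates and the identity above are recognized: the entire proof rests on the observation that both the Denjoy--Wolff monotonicity of $x_t$ and the assumed monotonicity of $v(t)$ push $\rho_t$ upward through the single factorization $\rho_t^{\,2}+1=2x_t\cosh(2v(t))$. The mildly subtle point is the boundary case---ensuring $\rho_t\geq 1$ for all $t$ so that $v^o(t)$ coincides with $\tfrac12\log\rho_t$ and not $\tfrac12|\log\rho_t|$---which is precisely what the Denjoy--Wolff theorem at the base point $t=0$ delivers.
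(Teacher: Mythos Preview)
Your proof is correct, and in fact cleaner than the paper's. Both arguments begin by rotating so that $\tau=1$, passing to $\Ha$ via the Cayley transform, writing $C(\phi_t(0))=\rho_t e^{i\theta_t}$ with $x_t=\rho_t\cos\theta_t$, and invoking the Denjoy--Wolff inequality \eqref{Eq:DW-Ha} to obtain $x_{t_2}\geq x_{t_1}\geq 1$. The difference lies in how one then links $v(t)$, $x_t$, and $\rho_t$. The paper argues geometrically: the hyperbolic disc $D(1,v(t_1))$ in $\Ha$ is a Euclidean disc centred on the real axis and passing through $\rho_{t_1}e^{\pm i\theta_{t_1}}$, from which one sees that $\{w:\Re w\geq x_{t_1},\ |w|\leq\rho_{t_1}\}\subset D(1,v(t_1))$; since $\rho_{t_2}e^{i\theta_{t_2}}$ has real part at least $x_{t_1}$ but lies outside $D(1,v(t_1))$, its modulus must exceed $\rho_{t_1}$. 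Your route replaces this picture by the exact identity $\rho_t^{\,2}+1=2\,x_t\cosh(2v(t))$, after which the conclusion follows by multiplying two monotone factors. Your argument is more self-contained and computationally transparent; the paper's version, by contrast, gives a clearer geometric picture of \emph{why} the two monotonicities cooperate, and does not require the explicit formula for $k_{\Ha}$. Either way the point you single out---that $\rho_t\geq x_t\geq 1$ so that $v^o(t)=\tfrac12\log\rho_t$ with no absolute value---is exactly what makes the final step legitimate.
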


\begin{proof}
Suppose $t_2>t_1$ and $v(t_2)\geq v(t_1)$. We can assume that the Denjoy-Wolff point of $(\phi_t)$ is $1$. Let $C(z):=\frac{1+z}{1-z}$ be the Cayley transform from $\D$ to $\Ha$. Let $\rho_t e^{i\theta_t}:=C(\phi_t(0))$,  with $\rho_t>0$ and $\theta_t\in (-\pi/2,\pi/2)$, $t\geq 0$. Then, $v(t)=k_\Ha(1, \rho_t e^{i\theta_t})$. 
By \eqref{Eq:DW-Ha}, 
\begin{equation}\label{Eq:JWC1H}
\rho_{t_2} \cos \theta_{t_2} \geq \rho_{t_1} \cos\theta_{t_1}\geq 1. 
\end{equation}
This implies that $\rho_{t_2}e^{i\theta_{t_2}}$ belongs to the set $\{w\in \C: \Re w\geq \rho_{t_1} \cos\theta_{t_1}\}$. 

Let $D(1, v(t_1)):=\{w\in \Ha: k_\Ha(1, w)<v(t_1)\}$, which is a Euclidean disc of center a real number $r\in (0,+\infty)$, containing $1$ in its interior and $\rho_{t_1}e^{i\theta_{t_1}}$ on its boundary (in fact, the center is $\cosh (2v(t_1))$ and the radius $\sinh (2 v(t_1))=|\cosh (2v(t_1))-\rho_{t_1}e^{i\theta_{t_1}}|$, but we do not need this explicit computation). In particular, $\partial D(1, v(t_1))$ contains both $\rho_{t_1}e^{i\theta_{t_1}}$ and $\rho_{t_1}e^{-i\theta_{t_1}}$. From this, a simple geometric consideration shows that 
\[
\{w\in \C: \Re w\geq \rho_{t_1} \cos\theta_{t_1}, |w|\leq \rho_{t_1}\}\subset D(1, v(t_1)).
\]
From the hypothesis, since $\rho_{t_2}e^{i\theta_{t_2}}\not \in D(1, v(t_1))$, the previous equation together with \eqref{Eq:JWC1H} implies immediately that $\rho_{t_2}\geq \rho_{t_1}$, and, hence, $v^o(t_2)\geq v^o(t_1)$.
\end{proof}

\begin{proposition}\label{Prop:non-decr-v}
Let $(\phi_t)$ be a non-elliptic semigroup with Koenigs function $h$. If $h(\D)$ is convex, then $[0,+\infty)\ni t \mapsto v(t)$ is non-decreasing.
\end{proposition}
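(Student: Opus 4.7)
The plan is to prove the proposition by a direct infinitesimal computation: show $v'(t) \ge 0$ for all $t \ge 0$, with the required sign condition coming from Study's classical characterization of convex univalent functions, and no subtler geometric input needed.

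Since $v(t) = k_\D(0, \phi_t(0)) = \tanh^{-1}|\phi_t(0)|$ is a strictly increasing function of $|\phi_t(0)|^2$, it will be enough to verify $\frac{d}{dt}|z(t)|^2 \ge 0$, where I write $z(t) := \phi_t(0)$. Differentiating the identity $h(\phi_t(z)) = h(z) + it$ in $t$ at $z = 0$ gives the infinitesimal generator relation $z'(t) = i/h'(z(t))$. Plugging this into $\frac{d}{dt}|z|^2 = 2\Re(\bar z\, z')$ and simplifying reduces the claim to the pointwise inequality
\[
F(t) := \Im\bigl(z(t)\, h'(z(t))\bigr) \ge 0, \qquad t \ge 0.
\]

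Since $z(0) = 0$, I have $F(0) = 0$, so it will suffice to prove $F'(t) \ge 0$. Differentiating $F$ along the orbit and using once more $z'(t)\, h'(z(t)) = i$, a short chain rule computation gives
\[
F'(t) = \Re\!\left(1 + \frac{z(t)\, h''(z(t))}{h'(z(t))}\right).
\]
At this point I will invoke Study's theorem: a univalent map $h : \D \to \C$ has convex image if and only if $\Re\bigl(1 + z h''(z)/h'(z)\bigr) > 0$ for all $z \in \D$. By hypothesis $h(\D)$ is convex, so $F'(t) > 0$ for every $t \ge 0$; together with $F(0) = 0$, this forces $F(t) > 0$ for $t > 0$, and hence $v$ is in fact strictly increasing on $(0,+\infty)$.

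The argument is essentially self-contained once Study's theorem is accepted, and I do not expect any real obstacle: convexity enters precisely through the sign condition that matches the output of the derivative computation. The only routine verifications are the two chain-rule calculations producing the formulas for $\frac{d}{dt}|z(t)|^2$ and for $F'(t)$, both of which are standard.
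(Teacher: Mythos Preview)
Your proof is correct and takes a genuinely different route from the paper's. The paper argues geometrically: it writes $v(t)=k_{h(\D)}(h(0),h(0)+it)$, notes that in a convex domain hyperbolic balls are convex, and derives a contradiction from $v(t_2)<v(t_1)$ by observing that the segment from $h(0)$ to $h(0)+it_2$ would then lie inside the hyperbolic ball of radius $v(t_1)$, forcing $v(t_1)<v(t_1)$. Your argument is instead infinitesimal and analytic: you reduce monotonicity of $v$ to the sign of $F(t)=\Im\bigl(z(t)h'(z(t))\bigr)$, compute $F'(t)=\Re\bigl(1+zh''/h'\bigr)$ along the orbit, and invoke Study's characterization of convex univalent maps to conclude $F'>0$.

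Each approach has its merits. Yours is self-contained once Study's theorem is granted and yields strict monotonicity. The paper's geometric argument transfers verbatim to the starlike case (the next proposition), since hyperbolic balls centered at the star center are starlike. It is worth noting that your method also handles that case, in fact even more directly: if $h(\D)$ is starlike with respect to $h(0)$, the analytic characterization $\Re\bigl(zh'(z)/(h(z)-h(0))\bigr)>0$ evaluated along the orbit, where $h(z(t))-h(0)=it$, gives immediately $F(t)/t>0$ for $t>0$, with no need to differentiate a second time.
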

\begin{proof}
Let $0\leq t_1\leq t_2$ and assume by contradiction that $v(t_2)<v(t_1)$.  
Note that $v(t)=k_\D(0,\phi_t(0))=k_{h(\D)}(h(0), h(0)+it)$. Hence, if $v(t_2)<v(t_1)$, it follows that  $h(0)+it_2\in D(h(0), v(t_1)):=\{w\in \C: k_{h(\D)}(h(0), w)<v(t_1)\}$.
Since the hyperbolic distance in a convex domain is a convex function, it follows that the hyperbolic discs are convex. 
Therefore, if $h(0)+it_2\in D(h(0), v(t_1))$, since $h(0)\in D(h(0), v(t_1))$ as well, it follows that $h(0)+is\in D(h(0), v(t_1))$ for all $s\in [0, t_2]$. However, $h(0)+it_1\in D(h(0), v(t_1))$ and equivalently, $v(t_1)=k_{h(\D)}(h(0), h(0)+it_1)<v(t_1)$. We are led to a contradiction. 
\end{proof}

More generally, we have the following result. 

\begin{proposition}\label{Prop:non-decr-v-st}
Let $(\phi_t)$ be a non-elliptic semigroup with Koenigs function $h$. If $h(\D)$ is starlike with respect to  $h(0)$ then $[0,+\infty)\ni t \mapsto v(t)$ is non-decreasing.
\end{proposition}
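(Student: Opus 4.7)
The plan is to reduce the claim to a single application of the Schwarz-Pick lemma, exploiting that starlikeness of $\Omega := h(\D)$ with respect to $p := h(0)$ gives an abundant supply of holomorphic self-maps of $\Omega$ fixing $p$. Since the hyperbolic distance is conformally invariant and $h \circ \phi_t(z) = h(z) + it$, we can translate the speed into ambient terms as $v(t) = k_\D(0, \phi_t(0)) = k_\Omega(p, p + it)$, noting that $p + it \in \Omega$ for every $t \geq 0$ because $\Omega$ is starlike at infinity in the vertical direction (as the image of a Koenigs function of a non-elliptic semigroup).

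The key geometric observation is that, for $0 < t_1 \leq t_2$, the point $p + it_1$ lies on the Euclidean segment from $p$ to $p + it_2$. Explicitly, setting $\lambda := t_1/t_2 \in (0,1]$, one has the identity
\[
p + it_1 = \lambda (p + it_2) + (1-\lambda) p.
\]
Thus the scaling map $\psi_\lambda(w) := \lambda w + (1-\lambda) p$ sends $p + it_2$ to $p + it_1$ and fixes $p$. Because $\Omega$ is starlike with respect to $p$, the whole segment from $p$ to any $w \in \Omega$ lies in $\Omega$, which is exactly the statement $\psi_\lambda(\Omega) \subset \Omega$. Therefore $\psi_\lambda$ is a holomorphic self-map of $\Omega$ fixing the point $p$.

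Applying the Schwarz-Pick lemma to $\psi_\lambda : \Omega \to \Omega$ with the pair of points $p$ and $p + it_2$ yields
\[
k_\Omega(p, p + it_1) = k_\Omega\bigl(\psi_\lambda(p), \psi_\lambda(p + it_2)\bigr) \leq k_\Omega(p, p + it_2),
\]
that is, $v(t_1) \leq v(t_2)$. The degenerate case $t_1 = 0$ is immediate since $v(0) = 0$. I do not foresee a serious obstacle; the only subtle point is recognizing that starlikeness with respect to $p$ is exactly what guarantees that the Euclidean scaling toward $p$ is an endomorphism of $\Omega$, and once this is in hand the Schwarz-Pick contraction closes the argument in one line. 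Note that the convex case of Proposition~\ref{Prop:non-decr-v} then becomes an immediate corollary, since convex domains are starlike with respect to every interior point.
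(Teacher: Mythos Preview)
Your proof is correct. The paper's approach (which it only sketches, referring back to the convex case) invokes the fact from \cite{Du} that in a domain starlike with respect to $p$ the hyperbolic discs centered at $p$ are themselves starlike with respect to $p$, and then argues by contradiction: if $v(t_2)<v(t_1)$ then $p+it_2$ lies in the open hyperbolic disc $D(p,v(t_1))$, starlikeness of that disc forces the whole segment from $p$ to $p+it_2$ into it, whence $p+it_1\in D(p,v(t_1))$, a contradiction. Your argument bypasses this intermediate geometric fact and applies Schwarz--Pick directly to the scaling $\psi_\lambda$. The two routes rest on the same observation---starlikeness of $\Omega$ with respect to $p$ is precisely the statement that each $\psi_\lambda$ is a holomorphic self-map of $\Omega$ fixing $p$, and indeed the starlikeness of the hyperbolic discs cited by the paper is itself proved by applying Schwarz--Pick to $\psi_\lambda$---but yours is more self-contained, avoiding both the external citation and the contradiction framing. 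As you note, the convex case then becomes an immediate corollary rather than requiring a separate argument.
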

\begin{proof}
Since the hyperbolic discs centered at $h(0)$ are also starlike with respect to $h(0)$ (see, {\sl e.g.}, \cite[Thm. 2.10]{Du}), the proof is similar to the proof of Proposition~\ref{Prop:non-decr-v} and we omit it.
\end{proof}


\section{Orthogonal Speed and Harmonic Measure}

\begin{lemma}\label{lem1}
Let $(\phi_t)$ be a non-elliptic semigroup in $\D$ with Denjoy-Wolff point $1$. Let $C(z):=\frac{1+z}{1-z}$ be the Cayley transform from $\D$ to $\Ha$. Let $\rho_t e^{i\theta_t}:=C(\phi_t(0))$,  with $\rho_t>0$ and $\theta_t\in (-\pi/2,\pi/2)$, $t\geq 0$. There exists $K>0$ such that for all $t\geq 1$, 
\[
\left| v^{o}(t) + \frac{1}{2} \log  \ha \left(1, \Theta_t , \Ha \right) \right| \leq K,
\]
where  $\Theta_t:= \{iy : |y| \geq \rho_t \}=\{iy : |y| \geq |C(\phi_t(0))| \}$.
\end{lemma}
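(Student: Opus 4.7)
The plan is to reduce the statement to an explicit evaluation of the harmonic measure of a subset of the imaginary axis from the interior point $1 \in \Ha$. Since $v^{o}(t) = \frac{1}{2}\log \rho_t$ by \eqref{Eq:ort-speed-inH}, the bound is equivalent to the assertion that the product $\rho_t \cdot \ha(1,\Theta_t,\Ha)$ lies in a fixed compact subinterval of $(0,\infty)$ independent of $t$. Phrased this way, the lemma is purely a statement about the Poisson kernel of the half-plane.

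First, I would compute the harmonic measure directly. Since $\Ha$ is a half-plane whose boundary is the imaginary axis and $\Theta_t = \{iy : |y| \geq \rho_t\}$ is symmetric about the real axis, the Poisson representation at the point $1$ gives
\[
\ha(1,\Theta_t,\Ha) \;=\; \frac{1}{\pi}\int_{|y|\geq \rho_t}\frac{dy}{1+y^2} \;=\; \frac{2}{\pi}\arctan\!\left(\frac{1}{\rho_t}\right),
\]
so that $\rho_t\cdot\ha(1,\Theta_t,\Ha)=\tfrac{2}{\pi}\,\rho_t\arctan(1/\rho_t)$.

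Next, I would establish a uniform lower bound on $\rho_t$. Applying \eqref{Eq:DW-Ha} between time $0$ and time $t$, together with $C(\phi_0(0))=C(0)=1$, I get $\rho_t\cos\theta_t \geq 1$, hence $\rho_t \geq 1$ for every $t\geq 0$. Setting $g(\rho):=\rho\arctan(1/\rho)$, an easy derivative check shows that $g$ is increasing on $[1,\infty)$, with $g(1)=\pi/4$ and $g(\rho)\to 1$ as $\rho\to\infty$. Therefore $g(\rho_t)\in [\pi/4,\,1]$, which yields the two-sided bound $\rho_t\cdot\ha(1,\Theta_t,\Ha)\in [\tfrac{1}{2},\,\tfrac{2}{\pi}]$. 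Taking $\tfrac{1}{2}\log$ then gives the conclusion of the lemma with an explicit constant, for instance $K=\tfrac{1}{2}\log 2$.

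I do not foresee any genuine obstacle here: the whole argument is an elementary Poisson-kernel computation combined with the Denjoy--Wolff invariance of the horocycle at $1$ (which in the model $\Ha$ is just the translated half-plane $\{\Re w\geq 1\}$) to keep $\rho_t$ bounded away from $0$. The only mildly delicate point is recognising that no separate asymptotic estimate of $\arctan(1/\rho_t)$ is needed, because the function $g$ is already bounded above and below on $[1,\infty)$.
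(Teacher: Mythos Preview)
Your proof is correct and follows essentially the same route as the paper: both compute $\ha(1,\Theta_t,\Ha)$ explicitly (you via the Poisson integral, the paper via the formula $\ha_t=\tfrac{1}{\pi}\Arg\!\big(\tfrac{i\rho_t-1}{1+i\rho_t}\big)$, which for $\rho_t>1$ gives $\tfrac{1}{\pi}\arctan\tfrac{2\rho_t}{\rho_t^2-1}=\tfrac{2}{\pi}\arctan(1/\rho_t)$, i.e.\ your expression) and then show that $\rho_t\cdot\ha(1,\Theta_t,\Ha)$ is trapped between two positive constants. The only difference is cosmetic: the paper uses the inequalities $\tfrac{\pi}{4}y\le\arctan y\le y$ on $[0,1]$, which forces it to first pass to $t\ge t_1$ with $\tfrac{2\rho_t}{\rho_t^2-1}<1$, whereas your monotonicity argument for $g(\rho)=\rho\arctan(1/\rho)$ on $[1,\infty)$, combined with the Denjoy--Wolff bound $\rho_t\ge 1$, yields the estimate uniformly for all $t\ge 0$ and with the explicit constant $K=\tfrac{1}{2}\log 2$.
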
 
\begin{proof}
Let $\ha_t:= \ha (1, \Theta_t, \Ha)$. By \cite[Example 7.2.5]{BCDbook}, 
\[
\ha_t=\frac{1}{\pi}\Arg\left( \frac{i\rho_t -1}{1+i\rho_t}\right).
\]
Since $\lim_{t\to+\infty}\rho_t=+\infty$, there exists $t_0> 0$ such that $\rho_t>1$, for all $t\geq t_0$. Hence, from the previous formula, for all $t\geq t_0$,
\[
\ha_t=\frac{1}{\pi}\arctan\frac{2\rho_t}{\rho_t^2-1}.
\]	
Moreover, there exists $t_1\geq t_0$ such that $\frac{2\rho_t}{\rho_t^2-1}<1$, for all $t\geq t_1$. For $y\in [0,1]$, we know that $\frac{\pi}{4} y \leq \arctan y \leq y$. Hence, there exist constants $0<c_1<c_2$ such that for all $t\geq t_1$,
\begin{equation*}
\frac{c_1}{\rho_t}\leq \frac{1}{2}\frac{1}{\rho_t-\frac{1}{\rho_t}}\leq \omega_t \leq \frac{2}{\pi}\frac{1}{\rho_t-\frac{1}{\rho_t}}\leq \frac{c_2}{\rho_t}.
\end{equation*}	
The above inequality and \eqref{Eq:ort-speed-inH} lead to the result at once.
\end{proof}

Let $(\phi_t)$ be a non-elliptic semigroup in $\D$ with Denjoy-Wolff point $1$. Note that, by the Denjoy-Wolff Theorem (see, {\sl e.g.}, \cite[Thm. 1.8.4]{BCDbook}), for every $t>0$, $\Re \phi_t(0)>0$. Bearing this in mind, we can state the following Proposition. 

\begin{proposition}\label{Prop:harmonic-measure-vo}
Let $(\phi_t)$ be a non-elliptic semigroup in $\D$ with Denjoy-Wolff point $1$. For $t\geq 1$, let $a_t\in\partial \D\cap \{\Im z>0\}$ be the intersection of $\partial \D$ with the circle containing $\phi_t(0)$, orthogonal to $(-1,1)$ and orthogonal to $\partial \D$ at $a_t$. Let $A_t\subset \partial \D$ be the closed arc containing $1$ and with end points $a_t$ and $\overline{a_t}$. Then there exists a constant $K>0$ such that
\[
\left| v^0(t)+\frac{1}{2} \log  \ha \left(0, A_t , \D \right) \right|\leq K, \quad \text{for all } t\geq 1.
\]
\end{proposition}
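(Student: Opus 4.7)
The plan is to reduce the proposition to Lemma~\ref{lem1} by transporting the geometric configuration from $\D$ to $\Ha$ via the Cayley transform $C(z)=(1+z)/(1-z)$ and invoking the conformal invariance of harmonic measure. Note that $C(0)=1$, $C(1)=\infty$, $C$ sends $\partial\D$ to $i\R\cup\{\infty\}$, and sends the geodesic $(-1,1)$ to the positive real axis $(0,+\infty)$.

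The first substantive step is to identify $C(\gamma_t)$, where $\gamma_t$ denotes the circle from the statement (through $\phi_t(0)$, orthogonal to $(-1,1)$, orthogonal to $\partial\D$ at $a_t$). Since Möbius transformations preserve the family of circles-and-lines as well as angles, $C(\gamma_t)$ is a circle or line through $C(\phi_t(0))=\rho_t e^{i\theta_t}$ which is orthogonal to both $(0,+\infty)$ and $i\R$. A straight line cannot be orthogonal to two perpendicular lines, so $C(\gamma_t)$ must be a Euclidean circle. Because a Euclidean circle is orthogonal to a line iff that line passes through its center, the center of $C(\gamma_t)$ lies on both $\R$ and $i\R$, hence at the origin. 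Since this circle passes through $\rho_t e^{i\theta_t}$, its radius equals $\rho_t$, and therefore
\[
C(\gamma_t)\cap\overline{\Ha}=\{w\in\overline{\Ha}:|w|=\rho_t\},
\]
a semicircle that meets $i\R$ exactly at $\pm i\rho_t$. Since $a_t$ is the endpoint with $\Im a_t>0$, this yields $C(a_t)=i\rho_t$ and $C(\overline{a_t})=-i\rho_t$.

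Next I identify $C(A_t)$. The arc $A_t\subset\partial\D$ has endpoints $a_t,\overline{a_t}$ and contains $1$. Its image under $C$ is a subset of $i\R\cup\{\infty\}$ with endpoints $\pm i\rho_t$, containing $C(1)=\infty$, hence
\[
C(A_t)=\{iy:|y|\geq \rho_t\}=\Theta_t,
\]
using the notation of Lemma~\ref{lem1}. By the conformal invariance of harmonic measure,
\[
\ha(0,A_t,\D)=\ha(C(0),C(A_t),\Ha)=\ha(1,\Theta_t,\Ha).
\]

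The conclusion then follows immediately from Lemma~\ref{lem1}: there exists $K>0$ with $|v^o(t)+\tfrac{1}{2}\log\ha(1,\Theta_t,\Ha)|\leq K$ for all $t\geq 1$, and substituting the identity above gives the claim. There is no genuine obstacle here; the only point to be careful about is the geometric verification that $C$ maps $\gamma_t$ to the centered semicircle of radius $\rho_t$, which is forced by the two orthogonality conditions.
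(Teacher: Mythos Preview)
Your proof is correct and follows exactly the same approach as the paper: reduce to Lemma~\ref{lem1} via the Cayley transform and conformal invariance of harmonic measure. The paper's own proof is a single sentence to this effect; you have simply supplied the geometric verification (that $C$ carries $\gamma_t$ to the origin-centered semicircle of radius $\rho_t$, hence $C(A_t)=\Theta_t$) that the paper leaves implicit.
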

\begin{proof}
It follows at once from Lemma~\ref{lem1} and the conformal invariance of the harmonic measure under the Cayley transform.
\end{proof}

\section{Estimates of harmonic measures}\label{Sec:estimates}

In all this section, $(\phi_t)$ denotes a non-elliptic semigroup in $\D$ with Denjoy-Wolff point $1$. 

Let $C(z):=\frac{1+z}{1-z}$ be the Cayley transform from $\D$ to $\Ha$. Let $\rho_t e^{i\theta_t}:=C(\phi_t(0))$,  with $\rho_t>0$ and $\theta_t\in (-\pi/2,\pi/2)$, $t\geq 0$.
For $t\geq 1$, let
\[
\Gamma_t:=\{\rho_s e^{i\theta_s}: s\geq t\} \quad \text{and} \quad  \Gamma^\ast_t:=\{iy: |y|\geq \min_{s\geq t}\rho_s\}.
\]
In addition, set $\Theta_t:=\{iy: |y|\geq \rho_t\}$ and note that $\Gamma^\ast_t=\Theta_t$ if and only if $\rho_s\geq \rho_t$ for all $s\geq t$.

\begin{lemma}\label{lem:good-sequence}
There exists an increasing sequence $\{t_n\}$, with $t_1\geq 1$, converging to $+\infty$ such that $\Theta_{t_n}=\Gamma^\ast_{t_n}$, for all $n$.
\end{lemma}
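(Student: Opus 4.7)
The plan is to exploit the two facts established earlier in the paper: the map $t \mapsto \rho_t$ is continuous on $[1,+\infty)$ (because $t \mapsto \phi_t(0)$ is continuous into $\D$, the Cayley transform $C$ is continuous, and $C(\phi_t(0)) \in \Ha$ is never zero so $\rho_t = |C(\phi_t(0))|$ varies continuously with $t$); and $\lim_{t \to +\infty} \rho_t = +\infty$, which was deduced from $v(t) \to +\infty$ at the end of Section 3. These two ingredients allow me to locate the required times as \emph{last exit times} from sublevel sets of $\rho$.

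Concretely, for each integer $n$ large enough that the set $S_n := \{t \geq 1 : \rho_t \leq n\}$ is nonempty, I would define
\[
t_n := \sup S_n.
\]
Because $\rho_t \to +\infty$, the set $S_n$ is bounded above, so $t_n < +\infty$; because $\rho$ is continuous, $S_n$ is closed in $[1,+\infty)$, so $t_n \in S_n$, meaning $\rho_{t_n} \leq n$. By the definition of supremum, $\rho_s > n$ for every $s > t_n$, hence
\[
\rho_s > n \geq \rho_{t_n} \qquad \text{for all } s > t_n,
\]
which yields $\min_{s \geq t_n} \rho_s = \rho_{t_n}$. This is precisely the identity $\Gamma^\ast_{t_n} = \Theta_{t_n}$ that we wanted.

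Finally, I would check that (a possibly thinned out) subsequence of the $t_n$ tends to $+\infty$. For any $T > 1$, the continuous function $\rho$ is bounded on $[1,T]$ by some $M_T$; then for every integer $n \geq M_T$ one has $[1,T] \subseteq S_n$, so $t_n \geq T$. Hence $t_n \to +\infty$, and after passing to a strictly increasing subsequence (renamed $\{t_n\}$) we obtain the claimed sequence with $t_1 \geq 1$.

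I do not expect a genuine obstacle here: the argument is a soft compactness/continuity argument. The only mildly delicate point is to record that $S_n$ is closed so that the supremum is actually attained; otherwise one could only conclude $\rho_{t_n} \leq n$ in the limit, which is still enough but looks cleaner with attainment.
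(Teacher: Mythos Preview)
Your proof is correct and follows essentially the same approach as the paper: both arguments use only the continuity of $t\mapsto\rho_t$ and the fact that $\rho_t\to+\infty$ to produce times $t_n$ at which $\rho$ attains its minimum on $[t_n,+\infty)$. The paper picks $t_n$ as a minimizer of $\rho$ on $[t_{n-1}+1,+\infty)$, whereas you use last exit times from sublevel sets $\{\rho\le n\}$; these are two equally elementary ways to extract the same information, and neither buys anything over the other.
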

\begin{proof}
Since $[0,+\infty)\ni t\mapsto \rho_t$ is continuous and $\lim_{t\to+\infty}\rho_t=+\infty$, there exists $t_1\geq 1$ so that $\rho_s\geq \rho_{t_1}$ for all $s\geq t_1$. Then, by induction, we take $t_n\geq t_{n-1}+1$ to be a point of minimum of $[t_{n-1}+1, +\infty)\ni t\mapsto \rho_t$.
\end{proof}

\begin{lemma}\label{lem:good-estim-increase}
Let $t\geq 1$. For all $s\geq t$,
\[
\ha(\rho_s e^{i\theta_s}, \Gamma_t^\ast, \Ha)\geq \frac{1}{2}.
\]
\end{lemma}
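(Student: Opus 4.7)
The plan is to reduce the inequality to an elementary Euclidean angle estimate in the right half-plane, exploiting the fact that harmonic measure in $\Ha$ of a boundary interval is its aperture divided by $\pi$.

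First, I would set $r := \min_{\sigma \geq t}\rho_\sigma$, so that by the definition given just before the lemma one has $\Gamma_t^\ast = \{iy : |y| \geq r\}$, while its complement in $\partial \Ha$ is the open vertical segment $I := \{iy : -r < y < r\}$. Because $\ha(w,\cdot,\Ha)$ is a probability measure on $\partial\Ha$, it is enough to prove that $\ha(w, I, \Ha) \leq 1/2$ for every $w = \rho_s e^{i\theta_s}$ with $s \geq t$.

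Next, I would apply the classical Poisson-kernel formula (the same computation that underlies Lemma~\ref{lem1} and \cite[Example 7.2.5]{BCDbook}), namely
\[
\ha(w, I, \Ha) \;=\; \frac{1}{\pi}\,\angle(-ir,\, w,\, ir),
\]
where $\angle(-ir, w, ir)$ denotes the unsigned Euclidean angle at the vertex $w$ of the triangle with vertices $-ir$, $w$, $ir$. The inscribed angle theorem (Thales) then says that this angle is at most $\pi/2$ precisely when $w$ lies outside the disc $\{|z|<r\}$, of which $[-ir,ir]$ is a diameter. But $|w| = \rho_s \geq \min_{\sigma\geq t}\rho_\sigma = r$ simply because $s \geq t$, hence $\angle(-ir,w,ir) \leq \pi/2$ and
\[
\ha(w,\Gamma_t^\ast,\Ha) \;=\; 1 - \ha(w, I, \Ha) \;\geq\; \tfrac{1}{2},
\]
which is the claim.

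I do not anticipate any real obstacle: the lemma is essentially the Thales-type observation that a point $w$ subtends an angle of at most $\pi/2$ on a vertical segment centered at the origin if and only if $w$ lies outside the disc having that segment as diameter. The only step deserving a little care is the bookkeeping of which subset of $\partial\Ha$ is $\Gamma_t^\ast$ and which is its complement, so that the inequality comes out in the right direction.
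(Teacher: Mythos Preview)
Your proof is correct and follows essentially the same approach as the paper. The paper normalizes by the scaling $w\mapsto w/r$ and then appeals to a reference (or ``direct computation'') to conclude that a point of modulus at least $1$ in $\Ha$ sees the set $\{iy:|y|\ge 1\}$ with harmonic measure at least $1/2$; you simply make that direct computation explicit via the angle formula and Thales' theorem, which is arguably cleaner.
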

\begin{proof}
Let $t_0\geq t$ be such that $\rho_{t_0}:=\min\{\rho_s: s\geq t\}$. By definition, $\Gamma_t^\ast=\{iy: |y|\geq \rho_{t_0}\}$. Consider the automorphism $T:\Ha\to \Ha$ give by $T(w):=\frac{w}{\rho_{t_0}}$. Let $G_1:=\{iy: |y|\geq 1\}$. By the conformal invariance of harmonic measure, we have
\[
\ha (\rho_s e^{i\theta_s}, \Gamma_t^\ast, \Ha)=\ha\left(\frac{\rho_s}{\rho_{t_0}}e^{i\theta_s}, G, \Ha\right).
\]
Since $\frac{\rho_s}{\rho_{t_0}}\geq 1$, we have $\ha(\frac{\rho_s}{\rho_{t_0}}e^{i\theta_s}, G, \Ha)\geq 1/2$ (this follows from a direct computation, or see \cite[Lemma 7.1.10]{BCDbook} and use conformal invariance under the Cayley transform).
\end{proof}

\begin{lemma}\label{Lemma:non-tg}
Fix $\theta\in (0,\pi/2)$. Then there exists $C=C(\theta)>0$ such that for all $t\geq 1$, with $|\theta_t|\leq \theta$, we have
\[
\ha(\rho_s e^{i\theta_s}, \Theta_t, \Ha)\geq C,
\]
for all $s\geq t$.
\end{lemma}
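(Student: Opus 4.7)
The plan is to normalize by the same automorphism used in Lemma~\ref{lem:good-estim-increase} and then reduce the estimate to a direct application of the minimum principle to an explicit harmonic measure on $\Ha$.

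\textbf{Step 1 (Rescaling).} First I would apply $T\colon \Ha\to\Ha$, $T(w):=w/\rho_t$. Setting $\alpha_s:=\rho_s/\rho_t$ for $s\geq t$, one has $T(\Theta_t)=G_1:=\{iy:|y|\geq 1\}$ and $T(\rho_s e^{i\theta_s})=\alpha_s e^{i\theta_s}$, so by conformal invariance of harmonic measure
\[
\ha(\rho_s e^{i\theta_s},\Theta_t,\Ha)=\ha(\alpha_s e^{i\theta_s},G_1,\Ha).
\]
It therefore suffices to bound the right-hand side from below uniformly in $s\geq t$ and in every admissible $t$.

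\textbf{Step 2 (Denjoy--Wolff confinement).} The half-plane Denjoy--Wolff inequality \eqref{Eq:DW-Ha} gives $\rho_s\cos\theta_s\geq \rho_t\cos\theta_t$ for $s\geq t$. Dividing by $\rho_t$ and using $|\theta_t|\leq\theta$, hence $\cos\theta_t\geq\cos\theta>0$, one obtains
\[
\Re(\alpha_s e^{i\theta_s})=\alpha_s\cos\theta_s\geq\cos\theta.
\]
Thus $\alpha_s e^{i\theta_s}$ always lies in the closed half-plane $D_\theta:=\{w\in\Ha:\Re w\geq\cos\theta\}$, uniformly in $s\geq t$ and in every admissible $t$.

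\textbf{Step 3 (Minimum principle).} The function $u(w):=\ha(w,G_1,\Ha)$ is positive and harmonic on $\Ha$. As $w\to\infty$ in $\Ha$, the angle at $w$ subtended by the complement of $G_1$ in $\partial\Ha$ shrinks to $0$, so $u(w)\to 1$. The minimum principle applied on $D_\theta$ therefore reduces the desired bound to a lower bound for $u$ on the vertical line $\{\cos\theta+iy:y\in\R\}$. A direct Poisson kernel computation (cf.\ \cite[Example 7.2.5]{BCDbook}) shows that $u$ restricted to this line is an even function of $y$, minimized at $y=0$, with value $C(\theta):=1-\frac{2}{\pi}\arctan(1/\cos\theta)$, which is strictly positive because $\cos\theta>0$. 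Chaining Steps 1--3 yields the claim with constant $C(\theta)$.

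\textbf{Main obstacle.} There is no substantial obstruction: once the rescaling of Step 1 is in place, the argument is essentially forced by Lemma~\ref{lem:good-estim-increase} plus the additional control $|\theta_t|\leq\theta$. The only point requiring care is to verify that $C(\theta)$ depends solely on $\theta$ (and not on $t$, $s$, $\rho_t$, $\rho_s$, or $\theta_s$), which is transparent from the normalization in Step 1 together with the minimum-principle argument in Step 3. The hypothesis $\theta<\pi/2$ is indispensable: it is exactly what guarantees $\cos\theta>0$ and therefore $C(\theta)>0$.
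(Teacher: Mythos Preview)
Your proof is correct and follows essentially the same route as the paper: rescale by $w\mapsto w/\rho_t$, use \eqref{Eq:DW-Ha} together with $|\theta_t|\le\theta$ to confine the rescaled orbit to $\{\Re w\ge\cos\theta\}$, and take $C(\theta)$ to be the infimum of $\ha(\cdot,G_1,\Ha)$ over that half-plane. The paper simply defines $C(\theta):=\min\{\ha(w,G,\Ha):\Re w>\cos\theta\}$ without further comment, whereas you go one step further and use the minimum principle plus a Poisson-kernel computation to identify $C(\theta)=1-\tfrac{2}{\pi}\arctan(1/\cos\theta)$ explicitly; this is a nice bonus but not required for the lemma.
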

\begin{proof}
By \eqref{Eq:DW-Ha}, for every $s\geq t$, $\rho_s \cos \theta_s \geq \rho_t \cos\theta_t$. 
Therefore, $\frac{\rho_s}{\rho_t}e^{i\theta_s}\in \{w\in \Ha: \Re w>\cos \theta_t\}$. Hence, repeating the argument in Lemma~\ref{lem:good-estim-increase} with $t_0=t$, we obtain 
\[
\ha (\rho_s e^{i\theta_s}, \Theta_t, \Ha)=\ha\left(\frac{\rho_s}{\rho_{t}}e^{i\theta_s}, G, \Ha\right)>C(\theta)>0,
\]
where 
\[
C(\theta):=\min\{\ha\left(w, G, \Ha\right) : \Re w>\cos \theta\}.
\]
\end{proof}

\begin{lemma}\label{Lem:Hall}
For all $t\geq 1$,
\[
 \ha (1, \Theta_t, \Ha)<2 \ha(1, \Gamma_t, \Ha\setminus \Gamma_t).
\]
\end{lemma}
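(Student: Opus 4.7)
The plan is to apply the strong Markov property of Brownian motion at the first hitting time of $\Gamma_t$. If $\mu$ denotes the hitting distribution on $\Gamma_t$ of Brownian motion started at $1$ in $\Ha \setminus \Gamma_t$ (so that $\mu(\Gamma_t) = \omega(1, \Gamma_t, \Ha \setminus \Gamma_t)$), then decomposing paths according to whether $\Gamma_t$ is hit before exit yields
\[
\omega(1, \Theta_t, \Ha) = \omega(1, \Theta_t, \Ha \setminus \Gamma_t) + \int_{\Gamma_t} \omega(w', \Theta_t, \Ha)\, d\mu(w').
\]
Estimating the integrand trivially by $1$ gives
\[
\omega(1, \Theta_t, \Ha) \leq \omega(1, \Theta_t, \Ha \setminus \Gamma_t) + \omega(1, \Gamma_t, \Ha \setminus \Gamma_t),
\]
so the target inequality reduces to the strict bound
\[
\omega(1, \Theta_t, \Ha \setminus \Gamma_t) < \omega(1, \Gamma_t, \Ha \setminus \Gamma_t). \qquad (\ast)
\]

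The key input for $(\ast)$ is the explicit identity $\omega(w, \Theta_t, \Ha) = 1/2$ valid for every $w$ on the semicircle $T_t := \{\rho_t e^{i\theta} : |\theta| < \pi/2\}$, which follows directly from the Poisson kernel of $\Ha$ (or equivalently from the observation that $T_t$ is the hyperbolic geodesic joining $\pm i\rho_t$). A similar explicit computation, combined with monotonicity in $|w'|$, also gives $\omega(w', \Theta_t, \Ha) \geq 1/2$ for every $w' \in \Ha$ with $|w'| \geq \rho_t$, with strict inequality when $|w'| > \rho_t$. With these identities in hand, I would apply the strong Markov property once more at the first exit of the half-disc $U := \Ha \cap \{|z| < \rho_t\}$, reducing both sides of $(\ast)$ to integrals over $T_t$ against the hitting distribution $\mu_T$ from $1$ in $U$; the identity $\omega(1,\Theta_t,\Ha)=\tfrac12\mu_T(T_t)$ falls out at once. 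A third application of the strong Markov, now from $w\in T_t$ at the first hit of $\Gamma_t$, combined with the strict bound $\omega(w',\Theta_t,\Ha)>1/2$ for $w'\in\Gamma_t$ past its tip $a$, provides the pointwise estimate that, upon integration against $\mu_T$, closes $(\ast)$.

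I expect the hard part to be the final strict closure of the estimate: the gain $\omega(w',\Theta_t,\Ha)-1/2$ on $\Gamma_t$ is not uniformly bounded below (it vanishes as $w'\to a$), so propagating the strictness through the integrations requires careful control of the support of the hitting distributions $\mu$ and $\mu_w$, exploiting that $\mu_w(\{a\})=0$ while $\Gamma_t\setminus\{a\}$ is entirely contained in the set where $\omega(\cdot,\Theta_t,\Ha)>1/2$. A further technical reduction is needed to handle the case where $\Gamma_t$ re-enters the half-disc $U$: this is arranged by passing to the ``good'' sequence of Lemma~\ref{lem:good-sequence} (where by construction $\Gamma_{t_n}$ stays in the closed exterior of $\{|z|<\rho_{t_n}\}$) and using the monotonicity of harmonic measure under nested domains to transport the estimate back to the original $t$.
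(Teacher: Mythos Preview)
The paper's proof is a one-line application of Gaier's Theorem: transferring to the disc via the Cayley transform, the orbit tail $W_t=\{\phi_s(0):s\ge t\}$ is a curve from $\phi_t(0)$ to the boundary point $1$, and Gaier's Theorem gives $\ha(0,W_t,\D\setminus W_t)>\ha(0,A'_t,\D)$, where $A'_t$ is the half of $A_t$ between $a_t$ and $1$. Since $\ha(0,A'_t,\D)=\tfrac{1}{2}\ha(0,A_t,\D)$ by the arc-length formula at the center, conformal invariance yields the lemma immediately.

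Your route is genuinely different: you try to manufacture the inequality from the strong Markov property and the identity $\ha(w,\Theta_t,\Ha)=\tfrac12$ on the semicircle $T_t$, avoiding Gaier's Theorem entirely. The first reduction to $(\ast)$ is fine, but the argument you sketch for $(\ast)$ does not close. After the decomposition from $w\in T_t$ at the first hit of $\Gamma_t$, what you actually obtain is
\[
\ha(w,\Theta_t,\Ha\setminus\Gamma_t)\;=\;\tfrac12-\int_{\Gamma_t}\ha(w',\Theta_t,\Ha)\,d\mu_w(w')\;\le\;\tfrac12\bigl(1-\ha(w,\Gamma_t,\Ha\setminus\Gamma_t)\bigr),
\]
and this implies $\ha(w,\Theta_t,\Ha\setminus\Gamma_t)<\ha(w,\Gamma_t,\Ha\setminus\Gamma_t)$ only when $\ha(w,\Gamma_t,\Ha\setminus\Gamma_t)\ge\tfrac13$. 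That fails badly for $w\in T_t$ far from $\Gamma_t$: if $\Gamma_t$ hugs the ray $\{iy:y\ge\rho_t\}$ and $w$ sits near $-i\rho_t$, then $\ha(w,\Gamma_t,\Ha\setminus\Gamma_t)$ is small while $\ha(w,\Theta_t,\Ha\setminus\Gamma_t)$ is close to $\tfrac12$. So no pointwise comparison on $T_t$ is available, and the integration against $\mu_T$ cannot be closed this way. In effect you are attempting to reprove Gaier's Theorem by elementary means; the content of that theorem is precisely that the integrated inequality holds even though the pointwise one does not.

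A secondary issue: the transport from the ``good'' sequence of Lemma~\ref{lem:good-sequence} back to an arbitrary $t\ge1$ does not work by monotonicity as you describe it, since for $t_n\le t$ one has $\Gamma_t\subset\Gamma_{t_n}$ and hence $\ha(1,\Gamma_t,\Ha\setminus\Gamma_t)\le\ha(1,\Gamma_{t_n},\Ha\setminus\Gamma_{t_n})$, which goes the wrong way. This particular defect is fixable (choose instead $t'\ge t$ minimizing $\rho_s$ over $[t,\infty)$, so that $\rho_{t'}\le\rho_t$ and $\Gamma_{t'}\subset\Gamma_t$), but the core gap in establishing $(\ast)$ remains.
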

\begin{proof}
This is essentially a consequence of the  Hall's (or Gaier's) Theorem. To give some details, let $a_t\in \partial \D$ and $A_t\subset \partial \D$ be as in Proposition~\ref{Prop:harmonic-measure-vo}. Let $A'_t\subset A_t$ be the arc with end points $1$ and $a_t$. Let $W_t:=\{\phi_s(0): s\geq t\}$, $t\geq 1$. Then, by Gaier's Theorem (see, {\sl e.g.}, \cite[Thm. 7.2.13]{BCDbook}), for all $t\geq 1$,
\[
\ha(0, W_t, \Ha\setminus W_t)> \ha(0, A'_t, \Ha).
\]
Now, by definition of harmonic measure (or see, {\sl e.g.} \cite[Eq. (7.1.2)]{BCDbook}), denoting by $\ell(A'_t)$ the Euclidean length of $A'_t$, we have
\[
\ha(0, A'_t, \Ha)=\frac{1}{2\pi}\ell(A'_t)=\frac{1}{2\pi}\frac{\ell(A_t)}{2}=\frac{1}{2}\ha(0, A_t, \Ha).
\]
Therefore, $\ha(0, W_t, \Ha\setminus W_t)>\frac{1}{2}\ha(0, A_t, \Ha)$. Using the conformal invariance of the harmonic measure and the Cayley transform, we have the result.
\end{proof}

\begin{lemma}\label{Lem:reverse}
Let $t\geq 1$. Suppose that there exists $c=c(t)>0$ such that for all $s\geq t$,
\begin{equation}\label{Eq:reverse}
\ha(\rho_s e^{i\theta_s}, \Theta_t, \Ha)\geq c.
\end{equation}

Then 
\[
\omega(1,\Theta_t,\Ha)\geq c\omega(1,\Gamma_t, \Ha\setminus\Gamma_t).
\]
\end{lemma}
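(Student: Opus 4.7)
The plan is to derive this reverse-type inequality by a direct application of the Strong Markov Property of Brownian motion, using the hypothesis \eqref{Eq:reverse} uniformly along the orbit curve $\Gamma_t$. The intuition is that any Brownian path in $\mathbb{H}$ starting at $1$ that eventually exits through $\Theta_t$ can be split into two pieces according to whether or not it first meets the curve $\Gamma_t$, and we only need a lower bound, so we can throw away the contribution from paths that reach $\Theta_t$ without touching $\Gamma_t$.

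First I would run a Brownian motion $B_\cdot$ from $1$ in $\mathbb{H}$, letting $\sigma$ denote the first hitting time of $\Gamma_t$ and $\tau$ the first exit time from $\mathbb{H}$. Note that $\Gamma_t\subset\mathbb{H}$ and $\Theta_t\subset\partial\mathbb{H}$, so $\Gamma_t\cap\Theta_t=\emptyset$. By the probabilistic interpretation of harmonic measure,
\[
\omega(1,\Theta_t,\mathbb{H})=\mathbb{P}_1(B_\tau\in\Theta_t)\ge \mathbb{P}_1\bigl(\sigma<\tau,\ B_\tau\in\Theta_t\bigr).
\]
Applying the Strong Markov Property at the stopping time $\sigma$ and using that, conditionally on $\sigma<\tau$, the process $(B_{\sigma+u})$ is a Brownian motion started from $B_\sigma\in\Gamma_t$, one obtains
\[
\mathbb{P}_1\bigl(\sigma<\tau,\ B_\tau\in\Theta_t\bigr)=\int_{\Gamma_t}\omega(z,\Theta_t,\mathbb{H})\,d\mu(z),
\]
where $\mu$ is the hitting distribution of $B$ on $\Gamma_t$ before exiting $\mathbb{H}$, which has total mass $\mu(\Gamma_t)=\omega(1,\Gamma_t,\mathbb{H}\setminus\Gamma_t)$.

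Now the hypothesis \eqref{Eq:reverse} asserts exactly that $\omega(z,\Theta_t,\mathbb{H})\ge c$ for every $z=\rho_s e^{i\theta_s}\in\Gamma_t$, so combining these gives
\[
\omega(1,\Theta_t,\mathbb{H})\ge c\,\mu(\Gamma_t)=c\,\omega(1,\Gamma_t,\mathbb{H}\setminus\Gamma_t),
\]
which is the claimed inequality. The only delicate point I foresee is justifying the Strong Markov disintegration rigorously, since $\Gamma_t$ is a continuous but a priori irregular curve; however, as $\Gamma_t$ is the image of a continuous injective curve in $\mathbb{H}$, it is a hittable Borel set with regular boundary from the complement in $\mathbb{H}$, and the decomposition above is a standard application of the Strong Markov Property, so no real obstacle arises. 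If one prefers a purely analytic formulation, the same identity can be read off from the fact that $z\mapsto\omega(z,\Theta_t,\mathbb{H})$ is a bounded harmonic function on $\mathbb{H}\setminus\Gamma_t$ with the appropriate boundary values, and the maximum principle on $\mathbb{H}\setminus\Gamma_t$ gives the same bound.
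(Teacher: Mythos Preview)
Your proof is correct and essentially identical to the paper's. The paper invokes the Strong Markov Property for harmonic measure in its analytic form (citing \cite[Lemma 3.7]{Bet98}) to write $\omega(1,\Theta_t,\Ha)=\omega(1,\Theta_t,\Ha\setminus\Gamma_t)+\int_{\Gamma_t}\omega(\alpha,\Theta_t,\Ha)\,\omega(1,d\alpha,\Ha\setminus\Gamma_t)$, drops the first (nonnegative) term, and applies the hypothesis inside the integral---exactly the decomposition you carry out in probabilistic language.
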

\begin{proof}
By the Strong Markov Property for harmonic measure (see, \cite[Lemma 3.7]{Bet98}), we have
\[
\ha(1,\Theta_t, \Ha)=\ha(1, \Theta_t, \Ha\setminus\Gamma_t)+\int_{\Gamma_t}\ha(\alpha, \Theta_t, \Ha)\ha(1, d\alpha, \Ha\setminus\Gamma_t),
\]
where, considering the measure $\lambda:=\ha(1, \cdot , \Ha\setminus\Gamma_t)$ on the boundary of $\Ha\setminus\Gamma_t$,  we let $\ha(1, d\alpha, \Ha\setminus\Gamma_t):=d\lambda$ ({\sl i.e.}, the integration with respect to the measure $\lambda$). 

Therefore, by hypothesis \eqref{Eq:reverse}, 
\begin{equation*}
\begin{split}
\ha(1,\Theta_t, \Ha)&\geq \int_{\Gamma_t}\ha(\alpha, \Theta_t, \Ha)\ha(1, d\alpha, \Ha\setminus\Gamma_t)\\&\geq c\int_{\Gamma_t}\ha(1, d\alpha, \Ha\setminus\Gamma_t)=\omega(1,\Gamma_t, \Ha\setminus\Gamma_t).
\end{split}
\end{equation*} 
\end{proof}


\section{Asymptotic monotonicity of orthogonal speed}

In this section, $(\phi_t)$ and $(\tilde\phi_t)$ are non-elliptic semigroups in $\D$ with Koenigs functions $h$ and $\tilde h$, respectively. We assume that $1$ is the Denjoy-Wolff point of both $(\phi_t)$ and $(\tilde \phi_t)$.

We use the notations introduced in the previous section, and we let $\Gamma_t, \Gamma_t^\ast, \Theta_t$ be the sets associated to $\phi_t$ and $\tilde\Gamma_t, \tilde\Gamma_t^\ast, \tilde\Theta_t$ the corresponding ones associated to $(\tilde\phi_t)$.

\begin{lemma}\label{Lemma:final}
Suppose $h(\D)\subset \tilde h(\D)$. Let $c>0$. Then there exists a constant $H\in \R$ such that, for every $t\geq 1$ so that 
\begin{equation}\label{Eq:Lemma:final}
 \ha(\tilde\rho_s e^{i\tilde\theta_s}, \tilde\Theta_t, \Ha)\geq c \quad \forall s\geq t,
\end{equation}
we have
\[
v^o(t)-\tilde v^o(t) \geq H.
\]
\end{lemma}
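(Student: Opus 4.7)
The plan is to translate the comparison of orthogonal speeds into a comparison of harmonic measures in $\Ha$, and then exploit the inclusion $h(\D)\subset\tilde h(\D)$ through the univalent holomorphic self-map $f=\tilde h^{-1}\circ h$ of $\D$.

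First I would apply Lemma~\ref{lem1} to both $(\phi_t)$ and $(\tilde\phi_t)$: up to an additive constant depending only on the two semigroups,
\[
v^o(t)-\tilde v^o(t)=\frac{1}{2}\log\frac{\omega(1,\tilde\Theta_t,\Ha)}{\omega(1,\Theta_t,\Ha)}+O(1),
\]
so it is enough to show $\omega(1,\Theta_t,\Ha)\le C\,\omega(1,\tilde\Theta_t,\Ha)$ for some $C=C(c)>0$ independent of $t$. For this, I would bound the numerator from above via Lemma~\ref{Lem:Hall}, $\omega(1,\Theta_t,\Ha)<2\,\omega(1,\Gamma_t,\Ha\setminus\Gamma_t)$, and the denominator from below via Lemma~\ref{Lem:reverse} applied to $(\tilde\phi_t)$ (the hypothesis \eqref{Eq:Lemma:final} is precisely its required input), getting $\omega(1,\tilde\Theta_t,\Ha)\ge c\,\omega(1,\tilde\Gamma_t,\Ha\setminus\tilde\Gamma_t)$. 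The lemma is thereby reduced to showing
\[
\omega(1,\Gamma_t,\Ha\setminus\Gamma_t)\le C'\,\omega(1,\tilde\Gamma_t,\Ha\setminus\tilde\Gamma_t)
\]
with $C'$ independent of $t$.

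For this last inequality I would use the univalent holomorphic self-map $f:=\tilde h^{-1}\circ h:\D\to\D$ and its Cayley conjugate $F:=C\circ f\circ C^{-1}:\Ha\to\Ha$. Setting $\tilde z_0:=f(0)=\tilde h^{-1}(h(0))$, the Koenigs identity gives $f(\phi_s(0))=\tilde\phi_s(\tilde z_0)$, so $F(\Gamma_t)=\{C(\tilde\phi_s(\tilde z_0)):s\ge t\}$ is the Cayley image of the $(\tilde\phi_t)$-orbit starting at $\tilde z_0$. A standard maximum-principle argument (the pullback by $F$ of $w\mapsto\omega(w,F(\Gamma_t),\Ha\setminus F(\Gamma_t))$ majorises $w\mapsto\omega(w,\Gamma_t,\Ha\setminus\Gamma_t)$ on the boundary of $\Ha\setminus\Gamma_t$, using univalence of $F$ and $F(\Ha)\subset\Ha$) yields
\[
\omega(1,\Gamma_t,\Ha\setminus\Gamma_t)\le\omega(F(1),F(\Gamma_t),\Ha\setminus F(\Gamma_t)).
\]
In the favourable case where the Koenigs functions can be normalised with $h(0)=\tilde h(0)$, one has $\tilde z_0=0$, $F(1)=1$ and $F(\Gamma_t)=\tilde\Gamma_t$, which is the desired inequality with $C'=1$.

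The main obstacle is that this normalisation is not in general compatible with the hypothesis $h(\D)\subset\tilde h(\D)$: shifting $\tilde h$ translates $\tilde h(\D)$ and may destroy the inclusion. In the general case one has $F(1)=C(\tilde z_0)$ at a fixed hyperbolic distance from $1$, and the Schwarz--Pick inequality applied to $\tilde\phi_s$ gives $k_\D(\tilde\phi_s(0),\tilde\phi_s(\tilde z_0))\le k_\D(0,\tilde z_0)$ for every $s$, so $F(\Gamma_t)$ stays in a bounded hyperbolic neighbourhood of $\tilde\Gamma_t$ in $\Ha$. A Harnack-type stability estimate for harmonic measure under such a bounded hyperbolic perturbation of both the base point and the target curve should absorb this mismatch into a universal constant, producing the required $C'$. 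I expect this stability step, bridging the mismatch between the orbits of $(\tilde\phi_s)$ from $0$ and from $\tilde z_0$, to be the delicate part of the proof.
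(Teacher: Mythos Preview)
Your proposal follows the paper's argument almost verbatim: reduce via Lemma~\ref{lem1} to comparing $\omega(1,\Theta_t,\Ha)$ with $\omega(1,\tilde\Theta_t,\Ha)$, bound the first from above by Lemma~\ref{Lem:Hall}, the second from below by Lemma~\ref{Lem:reverse} (whose input is exactly~\eqref{Eq:Lemma:final}), leaving only $\omega(1,\Gamma_t,\Ha\setminus\Gamma_t)\le C'\,\omega(1,\tilde\Gamma_t,\Ha\setminus\tilde\Gamma_t)$. Your map $F$ is a repackaging of the paper's conformal-invariance-plus-domain-monotonicity step: in the paper one passes to the Koenigs domains,
\[
\omega(1,\Gamma_t,\Ha\setminus\Gamma_t)=\omega\bigl(h(0),T,h(\D)\setminus T\bigr)\le\omega\bigl(h(0),T,\tilde h(\D)\setminus T\bigr),\qquad T:=h(0)+i[t,+\infty),
\]
which is precisely your inequality via $F$. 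So in the favourable case $h(0)=\tilde h(0)$ your proof and the paper's Case~1 coincide.

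The only divergence is how the general case $h(0)\ne\tilde h(0)$ is handled, and here the paper avoids the Harnack-type stability estimate you anticipate. Your concern that ``shifting $\tilde h$ translates $\tilde h(\D)$ and may destroy the inclusion'' targets the wrong normalisation: the paper does \emph{not} translate $\tilde h$. Instead it changes the base point on the $\tilde\phi$ side. Take $w_0=\tilde h^{-1}(h(0))\in\D$ (your $\tilde z_0$), choose a disc automorphism $A$ with $A(w_0)=0$, $A(1)=1$, and set $\tilde\varphi_t:=A\circ\tilde\phi_t\circ A^{-1}$. Its Koenigs function is $\tilde h\circ A^{-1}$, which has the \emph{same} image $\tilde h(\D)$, so the inclusion $h(\D)\subset\tilde h(\D)$ survives intact, and now $(\tilde h\circ A^{-1})(0)=h(0)$, so the favourable case applies to the pair $(\phi_t),(\tilde\varphi_t)$. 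One then invokes \cite[Prop.~16.1.6]{BCDbook} to the effect that the orthogonal speed of $(\tilde\phi_t)$ and that of any conjugate differ by at most a fixed constant, transferring the bound back to $\tilde v^o$. This reduces your ``delicate part'' to a one-line reference.

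One caveat worth noting: when the paper invokes ``Case~1'' for the pair $(\phi_t),(\tilde\varphi_t)$, the hypothesis~\eqref{Eq:Lemma:final} it tacitly uses in Lemma~\ref{Lem:reverse} pertains to the conjugated orbit $\tilde\varphi_s(0)=A(\tilde\phi_s(w_0))$, not to $\tilde\phi_s(0)$; the paper does not comment on this transfer. So the point you flag as delicate is not entirely illusory, but the intended and cleaner route is still the conjugation argument rather than a direct Harnack estimate on curves.
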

\begin{proof}
Let $C:\D \to \Ha$ be the Cayley transform given by $C(w)=(1+w)/(1-w)$. Hence, $h\circ C^{-1}:\Ha \to h(\D)$ is a biholomorphism such that $h(0)=r+it_0$, for some $r, t_0\in \R$ and  $h(C^{-1}(\Gamma_t))=r+i[t_0+t,+\infty)$. Similarly,  $\tilde h\circ C^{-1}:\Ha \to \tilde h(\D)$ is a biholomorphism mapping $\tilde\Gamma_t$ onto $\tilde{r}+i[\tilde t_0+t,+\infty)$, with $\tilde h(0)=\tilde{r}+i\tilde t_0$, for some $\tilde r, \tilde t_0\in \R$.
\smallskip

\noindent {\sl Case 1. Assume $r=\tilde r$ and $t_0=\tilde t_0$}.

\smallskip

Let $T:=r+i[t_0+t,+\infty)$. By (in order of usage) Lemma~\ref{Lem:Hall}, conformal invariance, domain monotonicity and again conformal invariance, we obtain
\begin{equation*}
\begin{split}
\ha(1,  \Theta_t, \Ha)&\stackrel{\tiny \hbox{(Lemma}~\ref{Lem:Hall})}{<} 2 \ha(1,\Gamma_t, \Ha\setminus\Gamma_t)\stackrel{\tiny \hbox{(conformal inv.)}}{=}2 \ha(r+i t_0, T,  h(\D)\setminus T)\\& \stackrel{\tiny \hbox{(domain monoton.)}}{\leq} 2 \ha(r+i t_0,T,  \tilde h(\D)\setminus T)\stackrel{\tiny \hbox{(conformal inv.)}}{=}2\ha (1,\tilde \Gamma_t, \Ha\setminus \tilde \Gamma_t)\\& \stackrel{\tiny \hbox{(Lemma}~\ref{Lem:reverse})}{\leq}\frac{2}{c}\ha(1,  \tilde\Theta_t, \Ha).
\end{split}
\end{equation*}
Therefore, by Lemma~\ref{lem1} (denoting by $\tilde K>0$ the constant related to $(\tilde\phi_t)$ and by $K>0$ the one related to $(\phi_t)$), we have
\begin{equation*}
\begin{split}
v^o(t)&\geq -\frac{1}{2}\log \ha(1,\Theta_t, \Ha)- K\geq -\frac{1}{2}\log \ha(1,\tilde\Theta_t, \Ha)- K-\frac{1}{2}\log \frac{2}{c}\\&\geq \tilde v^o(t)+\tilde K- K-\frac{1}{2}\log \frac{2}{c}.
\end{split}
\end{equation*}
Setting $H:=\tilde K- K-\frac{1}{2}\log \frac{2}{c}$, we have the result in this case.
\smallskip

\noindent {\sl Case 2. General case}. 

\smallskip

Let $w_0\in \D$ be such that $\tilde h(w_0)=r+it_0$ (this is possible because $h(\D)\subset \tilde h(\D)$). Let $A:\D \to \D$ be an automorphism such that $A(1)=1$ and $A(w_0)=0$. Let $\tilde\varphi_t:=A\circ \tilde\phi_t\circ A^{-1}$. Hence, $(\tilde\varphi_t)$ is a non-elliptic semigroup in $\D$ with Denjoy-Wolff point $1$, and it is easy to check that $\tilde h\circ A^{-1}$ is the Koenigs function of $(\tilde\varphi_t)$. Moreover, $\tilde h\circ A^{-1}(0)=\tilde h(w_0)=r+it_0$. Therefore, by Case 1, 
\[
v^o(t)-\tilde w^0(t)\geq H,
\]
where $\tilde w^0(t)$ denotes the orthogonal speed of $(\tilde\varphi_t)$. By \cite[Prop. 16.1.6]{BCDbook}, there exists $H'>0$ such that $|\tilde v^o(s)-\tilde w^o(s)|\leq H'$ for all $s\geq 0$, hence 
\[
v^o(t)-\tilde v^o(t)\geq H-H'.
\]
\end{proof}
 
\begin{proof}[Proof of Theorem~\ref{Thm:almost-asymp-mono}]
By \cite[Prop. 16.1.6]{BCDbook}, up to conjugation, we can assume without loss of generality that $1$ is the Denjoy-Wolff point of both $(\phi_t)$ and $(\tilde \phi_t)$.

By Lemma~\ref{lem:good-sequence} and Lemma~\ref{lem:good-estim-increase}, there exists an increasing sequence $\{t_n\}$, $t_1\geq 1$, converging to $+\infty$ such that $\ha(\tilde\rho_s e^{i\tilde\theta_s}, \tilde\Theta_{t_n}, \Ha)\geq 1/2$, for all $s\geq t_n$. Therefore, by Lemma~\ref{Lemma:final}, there exists $H\in \R$ such that 
\[
v^o(t_n)-\tilde v^o(t_n)\geq H
\]
for all $n$. The wanted statement follows at once from \eqref{Eq:iper-eucl}.
\end{proof}

\begin{theorem}\label{Thm:almost-asymp-mono1}
Let $(\phi_t), (\tilde \phi_t)$ be non-elliptic semigroups in $\D$. Let $h$ ({\sl respectively}, $\tilde h$) be the Koenigs function of $(\phi_t)$ ({\sl resp.} of $(\tilde \phi_t)$). Suppose that $h(\D)\subset \tilde h(\D)$ and that 
\begin{enumerate}
\item either $\{\phi_t(0)\}$ converges non-tangentially to the Denjoy-Wolff point,
\item or, $\{\tilde\phi_t(0)\}$ converges non-tangentially to the Denjoy-Wolff point,
\item or, $[0,+\infty)\ni t\mapsto \tilde v^o(t)$ is (eventually) non-decreasing,
\item or, $[0,+\infty)\ni t\mapsto \tilde v(t)$ is (eventually) non-decreasing.
\end{enumerate}
 Then 
\[
\liminf_{t\to +\infty}[v^o(t)-\tilde v^o(t)]>-\infty.
\]
\end{theorem}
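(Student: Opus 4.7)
The plan is to verify, for each of the four hypotheses, that $v^o(t)-\tilde v^o(t)$ is bounded from below for all sufficiently large $t$; after conjugation (via \cite[Prop.~16.1.6]{BCDbook}) we may assume $1$ is the common Denjoy-Wolff point. Cases (2), (3), (4) will follow by direct application of Lemma~\ref{Lemma:final}, whereas case (1) needs a separate, more elementary argument because its hypothesis is on $(\phi_t)$ rather than $(\tilde\phi_t)$.

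For cases (2), (3), (4) the task is to exhibit the uniform harmonic-measure bound \eqref{Eq:Lemma:final}. In case (2), non-tangential convergence of $(\tilde\phi_t(0))$ forces $|\tilde\theta_t|$ to be eventually bounded by some $\theta\in(0,\pi/2)$, and Lemma~\ref{Lemma:non-tg} applied to $(\tilde\phi_t)$ supplies the bound with $c=C(\theta)$. In case (3), by \eqref{Eq:ort-speed-inH} eventual monotonicity of $\tilde v^o$ is equivalent to $\tilde\rho_s\geq\tilde\rho_t$ for $s\geq t\geq t_0$; hence $\tilde\Theta_t=\tilde\Gamma_t^\ast$ for $t\geq t_0$, and Lemma~\ref{lem:good-estim-increase} provides $c=1/2$. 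Case (4) reduces to case (3) via Lemma~\ref{Lem:non-dec-tot-ort}, which transfers eventual monotonicity from $\tilde v$ to $\tilde v^o$.

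Case (1) is handled by comparing both orthogonal speeds against the total speed $v(t)$ of $(\phi_t)$. Non-tangential convergence of $(\phi_t(0))$ places the orbit in a Stolz region at $1$, so $|1-\phi_t(0)|/(1-|\phi_t(0)|)$ is bounded; the third line of \eqref{Eq:iper-eucl} then yields $v^T(t)\leq M$ for some $M$, and the right inequality of \eqref{Pyt} gives $v^o(t)\geq v(t)-M$. For the upper bound on $\tilde v^o(t)$, set $z_0:=\tilde h^{-1}(h(0))\in\D$; then $\tilde\phi_t(z_0)=\tilde h^{-1}(h(0)+it)$, and domain monotonicity of the hyperbolic distance gives
\[
k_\D(z_0,\tilde\phi_t(z_0))=k_{\tilde h(\D)}(h(0),h(0)+it)\leq k_{h(\D)}(h(0),h(0)+it)=v(t).
\]
Combining with the triangle inequality and the $k_\D$-contractivity of $\tilde\phi_t$ yields $\tilde v(t)\leq v(t)+2k_\D(0,z_0)$, and then the left inequality of \eqref{Pyt} gives $\tilde v^o(t)\leq v(t)+O(1)$. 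Subtracting, $v^o(t)-\tilde v^o(t)\geq -O(1)$ for all $t\geq 0$.

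I expect the main subtlety to be recognizing that case (1) falls outside the harmonic-measure framework of Lemma~\ref{Lemma:final}, which inherently asks for information along the $(\tilde\phi_t)$ orbit. The substitute relies on the observation that $v^o\sim v$ in the non-tangential regime of $(\phi_t)$, combined with the always-valid control $\tilde v^o\leq v+O(1)$ (essentially the content of Remark~\ref{Rem:BetCD}). Cases (2)--(4), by contrast, are essentially bookkeeping on top of the lemmas of Section~\ref{Sec:estimates}.
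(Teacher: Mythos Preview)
Your proposal is correct and follows essentially the same approach as the paper: cases (2), (3), (4) are handled by feeding the appropriate harmonic-measure lower bound (from Lemma~\ref{Lemma:non-tg} and Lemma~\ref{lem:good-estim-increase}, with Lemma~\ref{Lem:non-dec-tot-ort} reducing (4) to (3)) into Lemma~\ref{Lemma:final}, while case~(1) is handled by the elementary chain $v^o(t)\geq v(t)-O(1)\geq \tilde v(t)-O(1)\geq \tilde v^o(t)-O(1)$ coming from bounded tangential speed and domain monotonicity. Your write-up of case~(1) is in fact slightly more explicit than the paper's in spelling out the base-point shift $z_0=\tilde h^{-1}(h(0))$ needed to compare $v(t)$ with $\tilde v(t)$.
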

\begin{proof}
By \cite[Prop. 16.1.6]{BCDbook}, up to conjugation, we can assume without loss of generality that $1$ is the Denjoy-Wolff point of both $(\phi_t)$ and $(\tilde \phi_t)$.

(1) In this hypothesis, $\limsup_{t\to+\infty}v^T(t)<+\infty$, hence, by \eqref{Pyt}, there exists $c_1>0$ such that $|v(t)-v^o(t)|\leq c_1$, for all $t\geq 1$. 

Since $h(\D)\subset \tilde h(\D)$, then $v(t)\geq \tilde v(t)+c_2$, for some $c_2\in \R$ and for all $t\geq 0$. Taking into account again \eqref{Pyt}, we have
\[
v^o(t)\geq v(t)+c_1\geq \tilde v(t)+c_1+c_2\geq \tilde v^o(t)+c_1+c_2-\frac{1}{2}\log 2,
\]
for all $t\geq 0$, and we are done.

(2) In this hypothesis, by Lemma~\ref{Lemma:non-tg}, there exists $C>0$ such that for all $t\geq 1$, we have $\ha(\tilde\rho_s e^{i\tilde\theta_s}, \tilde\Theta_{t}, \Ha)\geq C$, for all $s\geq t$. Therefore, by Lemma~\ref{Lemma:final}, there exists $H\in \R$ such that 
\[
v^o(t)-\tilde v^o(t)\geq H,
\]
for all $t\geq 1$.

(3) The map $t\mapsto \tilde v^o(t)$ is (eventually) non-decreasing if and only if $t\mapsto \frac{1}{2}\log \tilde\rho_r$ is (eventually) non-decreasing, if and only if $t\mapsto  \tilde\rho_r$ is (eventually) non-decreasing. By definition, the latter condition is eventually equivalent to $\tilde\Gamma_t^\ast=\tilde\Theta_t$. If this is satisfied, by Lemma~\ref{lem:good-estim-increase}, $\ha(\tilde\rho_s e^{i\tilde\theta_s}, \tilde\Theta_{t}, \Ha)\geq 1/2$, for all $s\geq t$ and for all $t$ large enough. Again, the result follows then from Lemma~\ref{Lemma:final}.

(4) It follows at once from Lemma~\ref{Lem:non-dec-tot-ort} and (3).
\end{proof}

\begin{proof}[Proof of Theorem~\ref{Thm:main2}]
(1) ({\sl respectively} (2)) follows at once by Theorem~\ref{Thm:almost-asymp-mono1} (1) ({\sl resp.  (2)}) and \cite[Thm. 1.1]{BCGDZ} (or \cite[Thm. 17.3.1]{BCDbook}).

(3) In case $w_0=\tilde h(0)$,  the result follows from  Proposition~\ref{Prop:non-decr-v-st} and Theorem~\ref{Thm:almost-asymp-mono1}.(4).

In case $w_0\neq \tilde h(0)$, let $A:\D\to \D$ be an automorphism of $\D$ such that $A(w_0)=0$. Let $\tilde\varphi_t:=A\circ \tilde\phi_t\circ A^{-1}$. Hence, $(\tilde\varphi_t)$ is a non-elliptic semigroup in $\D$, and it is easy to check that $h_1:=\tilde h\circ A^{-1}$ is the Koenigs function of $(\tilde\varphi_t)$. Since $h_1(\D)$ is starlike with respect to $0$ by construction and hypothesis, it follows by Proposition~\ref{Prop:non-decr-v-st} that the total speed $w(t)$ of $(\tilde\varphi_t)$ is non-decreasing. Hence, by Theorem~\ref{Thm:almost-asymp-mono1}.(4), 
\[
\liminf_{t\to +\infty}[v^o(t)-w^o(t)]>-\infty,
\]
where $w^o(t)$ denotes the orthogonal speed of $(\tilde\varphi_t)$. By \cite[Prop. 16.1.6]{BCDbook}, there exists a constant $K_1>0$ such that $|\tilde v^o(t)-w^o(t)|\leq K_1$ for all $t\geq 0$. 
The wanted statement follows at once from \eqref{Eq:iper-eucl}.
\end{proof}

\section{Some applications}\label{sec:appl}

As it is clear from Theorem \ref{Thm:almost-asymp-mono} or Theorem~\ref{Thm:main2} (and \eqref{Eq:iper-eucl}), in order to obtain explicit estimates for the rate of convergence of orbits in terms of the geometry of the image of the Koenigs function of a semigroup, the main issue is to have estimates of the rate of convergence in special domains.  

In this section, we estimate the orthogonal speed of semigroups whose Koenigs function has image given by some special forms and apply our main results to get general applications.

\bigskip

\noindent{\bf 1.} 	Fix $\alpha>1$ and consider the following simply connected domain
	\[
	\Pi_{\alpha}:=\{ z\in\mathbb{C}\mid \Im z>|\Re z|^{\alpha}\}
	\]
The domain $\Pi_{\alpha}$ is starlike at infinity. Therefore, if $h_\alpha:\D\to \Pi_\alpha$ is a Riemann map, it turns out that $h_\alpha$ is the Koenigs function of the semigroup $(\phi^\alpha_t)$ where $\phi^\alpha_t(z):=h^{-1}(h(z)+it)$, $z\in \D$, $t\geq 0$. 	
\begin{figure}[htb]
		\centering
		\begin{tikzpicture}[scale=0.8]
		\draw (0,0) parabola (2,4);
		\draw (2.02,4.1) parabola (2.05,4.2);
		\draw (0,0) parabola (-2,4);
		\draw (-2.02,4.1) parabola (-2.05,4.2);
		\draw[thick, ->] (0,1) node[left]{$i$}--(0,3.94) node[left]{$it$}--(0,4.2);
		\draw[thick,dashed] (0,3.74)--(1.8,3.24) node[right]{$\sqrt{t-\frac{1}{2}}+i\left(t-\frac{1}{2}\right)$};
		\draw[thick,dashed] (0,3.74)--(-1.8,3.24) node[left]{$-\sqrt{s-\frac{1}{2}}+i\left(t-\frac{1}{2}\right)$};
		\end{tikzpicture}
		\caption[Parabola]{The domain $\Pi_{\alpha}$ with $\alpha=2$}
	\end{figure}
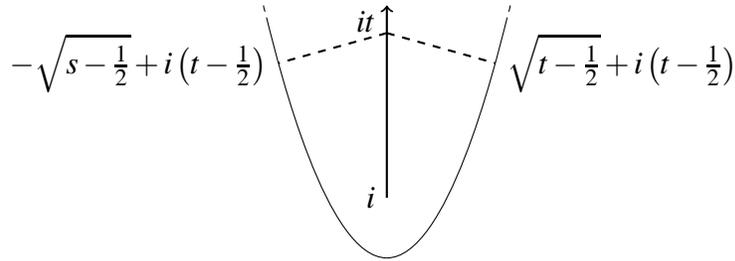	
	Clearly, $(\phi^\alpha_t)$ is  a non-elliptic semigroup in $\mathbb{D}$.
 Since $\bigcup_{t\ge 0}(\Pi_{\alpha}-i t)=\mathbb{C}$, the semigroup is parabolic with zero hyperbolic step. We might assume, without loss of generality, that  $h_\alpha(0)=i$  and $1$ is the Denjoy-Wolff point of $(\phi^\alpha_t)$. The domain $\Pi_\alpha$ is symmetric with respect to the imaginary axis, and therefore by \cite[Thm. 1.1]{BCGDZ} (or \cite[Thm. 17.3.3]{BCDbook}), the orbits of $(\phi_t)$ converge non-tangentially to $1$. Moreover, 
 	\[
	\gamma:[0,+\infty)\longrightarrow\Pi_{\alpha}\quad \text{with} \quad \gamma(t):=i(t+1)
	\] 
is a  geodesic for the hyperbolic distance of $h(\D)$ (see, {\sl e.g.}, \cite[Prop. 6.1.3]{BCDbook}) and $h([0,1))=\gamma([0,+\infty))$ (since $h^{-1}(\gamma(t))\to 1$, as $t\to+\infty$, hence $[0,1)$ and $h^{-1}(\gamma([0,+\infty))$ are geodesics in $\D$, whose closure contain both $0$ and $1$, hence, they are equal). 
	 In particular, the tangential speed of $(\phi_t^\alpha)$ is identically zero, the orthogonal speed $v^o_{\alpha}(t)$  coincides with the total speed $v_{\alpha}(t)$ and, since $\gamma$ is a geodesic,
	\[
	v_{\alpha}(t)=k_{\Pi_{\alpha}}(i,i(1+t))=\int_{1}^{1+t}\kappa_{\Pi_{\alpha}}(is;i)\,\mathrm{d}s.
	\]
	
	By the Distance Lemma for convex simply connected domains (see, {\sl e.g.}, \cite[Thm. 5.2.2]{BCDbook}),
	\begin{equation}\label{valfa}
	\frac{1}{2}\int_{1}^{1+t}\frac{\mathrm{d}s}{\delta_{\alpha}(is)}\le v_{\alpha}(t)\le\int_{1}^{1+t}\frac{\mathrm{d}s}{\delta_{\alpha}(is)},
	\end{equation}
	where $\delta_{\alpha}(ir)$ denotes the Euclidean distance from $ir$ to the boundary of $\Pi_{\alpha}$.
	\begin{lemma}\label{lemma:stimadist}
		Let $\alpha>1$. For  any $c\in (0,1)$, there exists $s_0\geq 1$ such that for all $s\geq s_0$,
		\[
		cs^{1/\alpha}\le\delta_{\alpha}(is)\le s^{1/\alpha}.
		\]
	\end{lemma}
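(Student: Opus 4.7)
The plan is very direct: compute the distance to the boundary explicitly via elementary geometry, using the symmetry of $\Pi_\alpha$ about the imaginary axis.

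For the upper bound, I would simply exhibit a witness. The point $(s^{1/\alpha}, s)$ lies on $\partial \Pi_\alpha$ (since $s = (s^{1/\alpha})^\alpha$), and its Euclidean distance from $is$ equals $s^{1/\alpha}$. Hence $\delta_\alpha(is) \leq s^{1/\alpha}$ for every $s \geq 0$, with no asymptotic assumption needed.

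For the lower bound, by the symmetry $z \mapsto -\bar{z}$ of $\Pi_\alpha$, it suffices to verify that every boundary point of the form $(x, x^\alpha)$ with $x \geq 0$ is at Euclidean distance at least $cs^{1/\alpha}$ from $(0,s)$, provided $s$ is large enough. I would split on $x$:
\begin{itemize}
\item If $x > c s^{1/\alpha}$, the horizontal component alone already gives distance $\geq x > c s^{1/\alpha}$.
\item If $0 \leq x \leq c s^{1/\alpha}$, then $x^\alpha \leq c^\alpha s$, so the vertical component satisfies $s - x^\alpha \geq (1-c^\alpha)s$, and the distance is at least $(1-c^\alpha) s$.
\end{itemize}
Since $\alpha > 1$, we have $s \gg s^{1/\alpha}$ as $s \to +\infty$, so the bound $(1-c^\alpha)s \geq cs^{1/\alpha}$ holds as soon as $s \geq s_0$ with $s_0 := \bigl(c/(1-c^\alpha)\bigr)^{\alpha/(\alpha-1)}$. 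Combining the two cases gives $\delta_\alpha(is) \geq c s^{1/\alpha}$ for $s \geq s_0$.

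There is no real obstacle here; the only modest point is to be honest about what $c < 1$ buys us, namely that $c^\alpha < 1$ so the vertical-case estimate has a strictly positive coefficient, and that the power $s^{1/\alpha}$ is genuinely slower than $s$ because $\alpha > 1$. Both are used precisely once in the above split.
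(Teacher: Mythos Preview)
Your argument is correct, and it is in fact simpler than the route taken in the paper. The upper bound is identical in both. For the lower bound, the paper locates the actual nearest boundary point by differentiating $x\mapsto x^2+(x^\alpha-s)^2$, obtaining the critical equation $x^\alpha+\tfrac{1}{\alpha}x^{2-\alpha}=s$, and then argues that its largest root $x_\alpha(s)$ satisfies $x_\alpha(s)=g_\alpha(s)\,s^{1/\alpha}$ with $g_\alpha(s)\uparrow 1$; since $\delta_\alpha(is)\ge x_\alpha(s)$, the claim follows. Your case split sidesteps the minimization entirely: either the horizontal displacement already exceeds $cs^{1/\alpha}$, or the vertical gap is at least $(1-c^\alpha)s$, which eventually dominates $cs^{1/\alpha}$ because $\alpha>1$. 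This is cleaner and yields an explicit threshold $s_0=(c/(1-c^\alpha))^{\alpha/(\alpha-1)}$, whereas the paper's approach, while slightly heavier, has the small advantage of identifying the asymptotic nearest point and making the limit $\delta_\alpha(is)/s^{1/\alpha}\to 1$ visible through the function $g_\alpha$.
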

	\begin{proof}
		Fix $s\geq 1$. Since $s^{1/\alpha}$ is the distance of $is$ to the point $s^{1/\alpha}+is\in\partial\Pi_{\alpha}$, it is clear that $\delta_{\alpha}(is)\le s^{1/\alpha}$.
		By the symmetry of $\Pi_{\alpha}$,  there exists $x\geq 0$ such that 
		\[
	\delta_\alpha(is)^2=|(x+ix^\alpha)-is|^2=x^2+(x^{\alpha}-s)^2.
		\] 
In fact, the point $x$ is the largest positive root of the  equation
		\begin{equation}\label{eqmin}
		x^{\alpha}+\frac{1}{\alpha}x^{2-\alpha}-s=0.
		\end{equation}
		Note that, if $1<\alpha\le 2$, this equation has a unique positive root for any $s\ge1$, while, if $\alpha>2$ and $s\ge 1$, there are two positive roots.
				
Now let $x_{\alpha}(s):=x$ be the point defined above. The function $s\mapsto x_{\alpha}(s)$ is strictly increasing and when $s$ goes to infinity, $x_{\alpha}(s)$ diverges to $+\infty$, as well. By \eqref{eqmin},
		\[
		s=x_{\alpha}(s)^{\alpha}\left(1+\frac{1}{\alpha x_{\alpha}(s)^{2(\alpha-1)}}\right)
		\] and one deduces that there exists a positive strictly increasing function $g_{\alpha}(s):[1,+\infty)\rightarrow(0,1)$ such that $\lim_{s\rightarrow+\infty}g_{\alpha}(s)=1$ and
		\[
		\delta_\alpha(is)\geq x_{\alpha}(s)=g_{\alpha}(s)\cdot s^{1/\alpha}.
		\]
Thus the proof is completed. 
	\end{proof}
	\begin{remark}
		If $\alpha=2$, we have \[
		\delta_2(is)=\sqrt{s-\frac{1}{2}+\left(-\frac{1}{2}\right)^2}=\sqrt{s-\frac{1}{4}}.
		\]
	\end{remark}
	Now we can apply Lemma \ref{lemma:stimadist} to \eqref{valfa}. Since \[
	\int_{1}^{1+t}s^{-1/\alpha}\,\mathrm{d}s=\left(\frac{\alpha}{\alpha-1}\right)\left[-1+(1+t)^{1-\frac{1}{\alpha}}\right],
	\]
	for any $\epsilon>0$ and for sufficiently large $t$ (depending on $\epsilon$ and $\alpha$),
	\begin{equation}\label{valfa2}
	\frac{1}{2}\left(\frac{\alpha}{\alpha-1}\right) t^{1-\frac{1}{\alpha}}\lesssim v_{\alpha}(t)= v^o_{\alpha}(t)\lesssim (1+\epsilon)\left(\frac{\alpha}{\alpha-1}\right) t^{1-\frac{1}{\alpha}},
	\end{equation}
	where $f_1(t)\lesssim f_2(t)$ means that there exists $\lambda\in\R$,  such that $f_2(t)-f_1(t)\ge \lambda$ for all $t$.
	
As a direct application of Theorem~\ref{Thm:main2}, \eqref{valfa2} and \eqref{Eq:iper-eucl}, we get the following result.
\begin{proposition}\label{prop:parext}
Suppose $(\phi_t)$ is a non-elliptic semigroup in $\D$ with Denjoy-Wolff point $\tau$ and Koenigs function $h$. Let $v^o(t)$ be the orthogonal speed of $(\phi_t)$. 
\begin{enumerate}
\item Suppose that  $h(\D)\subseteq p+\Pi_{\alpha}$, for some $\alpha>1$ and $p\in\mathbb{C}$. Then
		\[
		\liminf_{t\rightarrow+\infty}\left[v^o(t)-\frac{\alpha }{2(\alpha-1)}t^{1-\frac{1}{\alpha}}\right]>-\infty,
		\]
	or, equivalently, there exists $K>0$ such that for all $t\geq 0$,
\[
|\phi_t(0)-\tau|\leq K \exp\left(-\frac{\alpha}{\alpha-1}t^{1-\frac{1}{\alpha}}\right).
\]
\item Suppose that  $p+\Pi_{\alpha}\subseteq h(\D)$, for some $\alpha>1$ and $p\in\mathbb{C}$. Then for any $\epsilon>0$,
		\[
	\limsup_{t\rightarrow+\infty}\left[v^o(t)-\frac{(1+\epsilon)\alpha }{(\alpha-1)}t^{1-\frac{1}{\alpha}}\right]<+\infty
	\]
	or, equivalently, there exists $K(\epsilon)>0$ such that for all $t\geq 0$,
\[
|\phi_t(0)-\tau|\geq K(\epsilon) \exp\left(-\frac{2(1+\epsilon)\alpha}{\alpha-1}t^{1-\frac{1}{\alpha}}\right).
\]
\end{enumerate}	
	\end{proposition}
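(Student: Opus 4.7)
The plan is to compare the given semigroup $(\phi_t)$ with the parabolic ``model'' $(\phi^\alpha_t)$ whose Koenigs function has image $p+\Pi_\alpha$; the orthogonal speed $v^o_\alpha(t)$ of the model is controlled two-sidedly by \eqref{valfa2}, and Theorem~\ref{Thm:main2} will provide a one-sided $O(1)$-comparison between $v^o(t)$ and $v^o_\alpha(t)$ in either direction. The Euclidean statements then translate at once via the second inequality of \eqref{Eq:iper-eucl}, so it suffices to prove the $\liminf$/$\limsup$ bounds on $v^o(t)$ themselves.

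The common geometric input is that $p+\Pi_\alpha$ satisfies all three hypotheses of Theorem~\ref{Thm:main2}. Since $x\mapsto |x|^\alpha$ is convex for $\alpha>1$, the region $\Pi_\alpha$, and hence $p+\Pi_\alpha$, is convex and in particular starlike with respect to any of its interior points; moreover, $p+\Pi_\alpha$ is symmetric about the vertical axis through $p$, so, with any $z_0$ on that axis as base point, $\tilde\delta^+(t)\equiv\tilde\delta^-(t)$ and the quasi-symmetry condition holds trivially with constant $K=1$.

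For part~(1), the hypothesis $h(\D)\subseteq p+\Pi_\alpha$ means that in the notation of Theorem~\ref{Thm:main2} we identify $(\tilde\phi_t)$ with $(\phi^\alpha_t)$; condition (3) (or equivalently (2)) of that theorem applies and yields
\[
\liminf_{t\to+\infty}[v^o(t)-v^o_\alpha(t)]>-\infty,
\]
and combining with the lower bound of \eqref{valfa2} produces the desired estimate. For part~(2), the hypothesis $p+\Pi_\alpha\subseteq h(\D)$ forces the opposite identification: one now casts $(\phi^\alpha_t)$ in the role of the smaller semigroup, so as ``$(\phi_t)$'' in the theorem's statement, and invokes its condition (1). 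The conclusion becomes
\[
\liminf_{t\to+\infty}[v^o_\alpha(t)-v^o(t)]>-\infty,
\]
i.e., $v^o(t)\le v^o_\alpha(t)+O(1)$, and feeding in the upper bound of \eqref{valfa2} (valid for every $\epsilon>0$ and all sufficiently large $t$ depending on $\epsilon$ and $\alpha$) completes the argument.

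No substantial analytic obstacle is expected: the geometry of $p+\Pi_\alpha$ is elementary and all three hypotheses of Theorem~\ref{Thm:main2} are at our disposal, while the delicate work---the two-sided asymptotics \eqref{valfa2} together with the general monotonicity Theorem~\ref{Thm:main2}---has already been carried out. The only point requiring care is the bookkeeping of which of the two semigroups plays the role of $(\phi_t)$ and which plays $(\tilde\phi_t)$ in each of the two directions of comparison.
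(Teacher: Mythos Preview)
Your proposal is correct and follows exactly the approach the paper intends: the paper merely states that the proposition is ``a direct application of Theorem~\ref{Thm:main2}, \eqref{valfa2} and \eqref{Eq:iper-eucl},'' and you have supplied precisely those details, including the correct identification of which semigroup plays which role and which hypothesis of Theorem~\ref{Thm:main2} is invoked in each direction.
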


\bigskip

\noindent{\bf 2.} Let $\alpha>1$ and $\theta\in \left(0,\pi\right]$. We let
 \[
\Xi(\alpha,\theta):=(-\overline{\Ha}\cap\Pi_{\alpha})\cup W(\theta),
\] where $W(\theta):=\left\lbrace z\in \C\mid\arg (z)\in\left(\frac{\pi}{2}-\theta,\frac{\pi}{2}\right) \right\rbrace$.
\begin{figure}[htb]
	\centering
	\begin{tikzpicture}[scale=0.8]
	\draw (0,0) -- (2.3094,4);
	\draw (2.36714,4.1) -- (2.42487,4.2);
	\draw (0,0) parabola (-2,4);
	\draw (-2.02,4.1) parabola (-2.05,4.2);
	\draw[thick, ->] (0,1) node[left]{$i$}--(0,3.94) node[left]{$it$}--(0,4.2);
	\draw[dashdotted] (0.258819,0.965926)--(1.071797,4) node[right]{$H$}--(1.125387,4.2);
	\draw[thick,dashed] (0,3.74)--(1.61947,2.805) node[right]{$\frac{t}{2}+i\frac{t}{2}\sqrt{3}$};
	\draw[thick,dashed] (0,3.74)--(-1.8,3.24) node[left]{{\tiny $-\sqrt{t-\frac{1}{2}}+i\left(t-\frac{1}{2}\right)$}};
	\end{tikzpicture}
	\caption{The domain $\Xi(2,\frac{\pi}{6})$}
\end{figure}
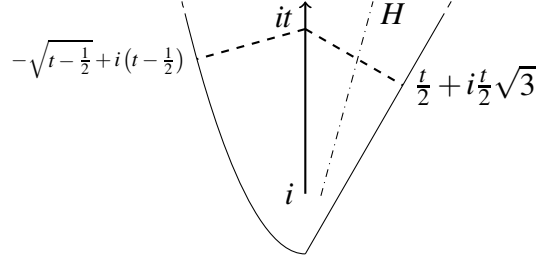

Once again, such a domain is starlike at infinity. It is convex when $0<\theta\leq\frac{\pi}{2}$, otherwise it is  starlike with respect to any  point $z\in \Ha$ with $\arg (z)>0$.  If $h_{\alpha,\theta}:\D\to \Xi(\alpha,\theta)$ is a Riemann map, then it is the Koenigs function of the semigroup $\phi^{\alpha,\theta}_t(z):=h_{\alpha,\theta}^{-1}(h_{\alpha,\theta}(z)+it))$ defined for any $z\in \D$ and $t\geq 0$. As $\bigcup_{t\ge 0}(\Xi(\alpha,\theta)-it)$ is the whole complex plane, the semigroup $(\phi^{\alpha,\theta}_t)$ is parabolic with zero hyperbolic step. Again, we can assume  $h_{\alpha,\theta}(0)=i$, without loss of generality.
For any $t\geq 1$,
\[
\delta_{\alpha,\theta}^+(it):=\min\left\{ \inf\{|z-it|\mid\Re z\ge 0,z\in\C\setminus\Xi(\alpha,\theta)\},t\right\}=\begin{cases}
(\sin\theta)t & \theta\in\left( 0,\frac{\pi}{2}\right) \\
t & \theta\in\left[\frac{\pi}{2},\pi\right]
\end{cases}
\]
while
\[
\delta_{\alpha,\theta}^-(it):=\min\left\{ \inf\{|z-it|\mid\Re z\le 0,z\in\C\setminus\Xi(\alpha,\theta)\},t\right\}=\delta_{\alpha}(it),
\]
where $\delta_{\alpha}(it)$ is the distance from the boundary of $\Pi_{\alpha}$, considered in the first example. By Lemma~\ref{lemma:stimadist}, $\delta_{\alpha}(it)=O(t^{1/\alpha})$, so it follows that the domain is not quasi-symmetric with respect to vertical axes. In particular, by \cite[Thm. 1.1(2)]{BCGDZ},  each orbit of the semigroup $(\phi^{\alpha,\theta}_t)$ converges tangentially to its Denjoy-Wolff point and we can assume that up to conjugation with a rotation, it is equal to $1$.

Let us recall the following result.
\begin{lemma}\cite[Corollary 16.2.6]{BCDbook}\label{lemma:angles}
	Let be $\theta,\eta\in[0,\pi]$, not both equal to zero. Consider the domain \[
	W(\theta,\eta)=\left\lbrace z\in \C\mid\arg (-iz)\in\left(-\theta,\eta\right) \right\rbrace.
	\]
	Let $(\phi_t)$ be a semigroup of holomorphic self-maps in $\D$ with Koenigs map $h$ and $h(\D)=p+W(\theta,\eta)$, for some $p\in\C$.
	\begin{enumerate}
	\item
	If both $\theta$ and $\eta$ are non-zero, the tangential speed $v^T(t)$ of $(\phi_t)$ is bounded, while for the total and orthogonal speeds one has
	\[
	v(t)\sim v^o(t)\sim \frac{1}{2}\left(\frac{\pi}{\theta+\eta}\right)\log t.
	\]
	\item
	If otherwise $\theta\in(0,\pi]$ and $\eta=0$, the speeds of $(\phi_t)$ have the following behavior \[
	v^T(t)\sim \frac{1}{2}\log t,\quad v^o(t)\sim\frac{\pi}{2\theta}\log t,\quad
	v(t)\sim\frac{\pi+\theta}{2\theta}\log t.
	\]
	When $\theta=0$ and $\eta\in (0,\pi]$, the result is analogous, just replace $\theta$ with $\eta$.
	\end{enumerate}
\end{lemma}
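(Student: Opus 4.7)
The plan is to straighten the sector $p+W(\theta,\eta)$ via an explicit conformal power map and then to read off the three speeds from the asymptotics of the resulting curve in the right half-plane $\Ha=\{\Re w>0\}$. By translation invariance of the hyperbolic distance we may assume $p=0$, set $w_0:=h(0)$ and $\sigma:=\theta+\eta$, and use the isomorphism
\[
G\colon W(\theta,\eta)\longrightarrow \Ha,\qquad G(w):=e^{i\pi(\theta-\eta)/(2\sigma)}\,(-iw)^{\pi/\sigma}
\]
(principal branch). Writing $w_0=a+ib$, a direct expansion gives
\[
|G(w_0+it)|\sim t^{\pi/\sigma},\qquad \arg G(w_0+it)=\frac{\pi(\theta-\eta)}{2\sigma}-\frac{\pi a}{\sigma\, t}+O(t^{-2}).
\]
This is the only analytic input required; everything else is elementary.

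For the total speed, conformal invariance yields $v(t)=k_\Ha(G(w_0),G(w_0+it))$, and the elementary asymptotic
\[
k_\Ha(p_0,re^{i\alpha})=\tfrac{1}{2}\log(r/\cos\alpha)+O(1)\qquad (r\to+\infty),
\]
uniform in $p_0$ on compacta of $\Ha$ (derived from $k_\D(0,z)=\tfrac{1}{2}\log\tfrac{1+|z|}{1-|z|}$ via the Cayley transform), handles both cases. In case (1), with $\theta,\eta>0$, the limiting angle $\pi(\theta-\eta)/(2\sigma)$ lies strictly in $(-\pi/2,\pi/2)$, so $\cos\arg G(w_0+it)$ is bounded below and the estimate gives $v(t)\sim\tfrac{\pi}{2\sigma}\log t$. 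Moreover $t\mapsto G(w_0+it)$ eventually lies in a Stolz region at $\infty\in\partial\Ha$, so $\phi_t(0)$ converges non-tangentially to the Denjoy-Wolff point $1$ (alternatively by the quasi-symmetry criterion of \cite{BCGDZ}, since the sector is obviously vertically quasi-symmetric when both opening angles are positive); combined with \eqref{Eq:iper-eucl} and \eqref{Pyt} this forces $v^T(t)=O(1)$ and hence $v^o(t)\sim v(t)$.

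In case (2) I take $\eta=0$ (the other sub-case is symmetric). Now $\arg G(w_0+it)\to\pi/2$ and the refined expansion yields $\cos\arg G(w_0+it)\sim \pi a/(\theta\, t)$ with $a=\Re w_0>0$; substitution gives
\[
v(t)\sim\tfrac{1}{2}\Bigl(\tfrac{\pi}{\theta}+1\Bigr)\log t=\tfrac{\pi+\theta}{2\theta}\log t.
\]
To separate $v^o$ from $v^T$ in this tangential regime, I use \eqref{Eq:ort-speed-inH}: $v^o(t)=\tfrac{1}{2}\log\rho_t$ with $\rho_t=|C(\phi_t(0))|$, and I compute $\rho_t$ by inverting the Koenigs identity. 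The map $H:=h\circ C^{-1}\colon \Ha\to W(\theta,0)$ satisfies $H(1)=w_0$ and $H(\infty)=\infty$, so $G\circ H$ is a conformal automorphism of $\Ha$ fixing $\infty$, hence affine of the form $u\mapsto Au+i\beta$ with $A>0$, $\beta\in\R$; consequently $H(u)\sim \kappa\, i\, u^{\theta/\pi}$ as $u\to\infty$ in $\Ha$ for an explicit constant $\kappa>0$. The identity $H(\rho_t e^{i\theta_t})=w_0+it$ then yields $\rho_t\sim \kappa'\,t^{\pi/\theta}$, so $v^o(t)\sim\tfrac{\pi}{2\theta}\log t$, and \eqref{Pyt} gives $v^T(t)\sim v(t)-v^o(t)\sim\tfrac{1}{2}\log t$.

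The main obstacle is case (2): tangentiality rules out the easy estimate $v^o\sim v$ of case (1), so one genuinely has to compute the modulus of the Cayley image $\rho_t=|C(\phi_t(0))|$ by inverting the Koenigs map asymptotically. The clean ingredient is the identification of $G\circ H$ as an affine dilation of $\Ha$ coming from the two boundary normalizations $H(1)=w_0$ and $H(\infty)=\infty$; once this is in place, the three speed asymptotics of case (2) each fall out in one line.
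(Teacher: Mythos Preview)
Your argument is correct. The paper does not supply its own proof of this lemma: it is quoted verbatim as \cite[Corollary~16.2.6]{BCDbook} (see also \cite[Prop.~6.5]{Br}) and used as a black box in Section~\ref{sec:appl}. So there is no ``paper's proof'' to compare against beyond the external reference.

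That said, your route is the natural one and matches the spirit of the cited sources: straighten the sector by the power map $w\mapsto (-iw)^{\pi/\sigma}$, then read off the asymptotics in $\Ha$. A couple of small remarks on presentation. First, in case~(2) the key step is the identification of $G\circ h\circ C^{-1}$ as an affine automorphism $u\mapsto Au+i\beta$ of $\Ha$; you should make explicit that this automorphism fixes $\infty$ because the Denjoy--Wolff point $\tau$ is sent to $\infty$ both by $C$ and by $G\circ h$ (the latter since $h(\phi_t(0))=w_0+it\to\infty$ and $|G(w)|=|w|^{\pi/\sigma}$). Once this is clear, $A\rho_t=|G(w_0+it)|+O(1)=t^{\pi/\theta}+O(1)$ gives $v^o(t)=\tfrac12\log\rho_t=\tfrac{\pi}{2\theta}\log t+O(1)$ with the required additive control, not merely a multiplicative one. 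Second, in case~(1) you invoke non-tangential convergence to get $v^T=O(1)$; this is fine, but note that the same affine identification already gives it directly: the curve $C(\phi_t(0))=A^{-1}(G(w_0+it)-i\beta)$ has argument tending to $\tfrac{\pi(\theta-\eta)}{2\sigma}\in(-\tfrac{\pi}{2},\tfrac{\pi}{2})$, so it stays in a fixed Stolz sector. This avoids appealing to the quasi-symmetry criterion of \cite{BCGDZ}, which is heavier than needed here.
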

Returning to our domain $\Xi(\alpha,\theta)$, we have that $W(\theta)\subset\Xi(\alpha,\theta)$. Moreover, for any $\eta\in (0,\pi]$ we can find a point $p_\eta\in\C$, for which $\Xi(\alpha,\theta)\subset p_\eta + W(\theta,\eta)$. So by Lemma~\ref{lemma:angles} and Theorem~\ref{Thm:main2}, it follows that for the orthogonal speed $v^o_{\alpha,\theta}$ of $(\phi_t^{\alpha,\theta})$ one has 
\begin{equation}\label{eq:orthxi}
\frac{\pi}{2\theta}\left(1-\epsilon\right)\log t\lesssim v^o_{\alpha,\theta}(t)\lesssim \frac{\pi}{2\theta}\log t,
\end{equation}
where $\epsilon:=\frac{\eta}{\theta+\eta}\in\left( 0, \frac{\pi}{\theta+\pi}\right]$ is arbitrarily small, for $\eta$ sufficiently close to zero. More generally, by the same argument, we have an analogous outcome to Proposition \ref{prop:parext}.

\begin{proposition}\label{prop:stimexi}
	Suppose $(\phi_t)$ is a non-elliptic semigroup in $\D$ with Denjoy-Wolff point $\tau$ and Koenigs function $h$. Let $v^o(t)$ be the orthogonal speed of $(\phi_t)$. 
	\begin{enumerate}
		\item\label{1st} Suppose that  $h(\D)\subseteq p+\Xi(\alpha,\theta)$, for some $\alpha>1$, $\theta\in(0,\pi]$ and $p\in\C$. Then for any $\epsilon\in\left( 0, \frac{\pi}{\theta+\pi}\right] $
		\[
		\liminf_{t\rightarrow+\infty}\left[v^o(t)-\frac{\pi}{2\theta}(1-\epsilon)\log t\right]>-\infty,
		\]
		or, equivalently, there exists $K(\epsilon)>0$ such that for all $t\geq 0$,
		\[
		|\phi_t(0)-\tau|\leq K(\epsilon) t^{(-1+\epsilon)\pi/\theta}.
		\]
		\item\label{2nd} Suppose that  $p+\Xi(\alpha,\theta)\subseteq h(\D)$, for some $\alpha>1$, $\theta\in(0,\pi]$ and $p\in\C$. Let's assume that $h(\D)$ is starlike with respect to an inner point. Then
		\[
		\limsup_{t\rightarrow+\infty}\left[v^o(t)-\frac{\pi}{2\theta}\log t\right]<+\infty
		\]
		or, equivalently, there exists $K>0$ such that for all $t\geq 0$,
		\[
		|\phi_t(0)-\tau|\geq K t^{-\pi/\theta}.
		\]
	\end{enumerate}	
\end{proposition}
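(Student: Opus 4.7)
The plan is to reduce both items to Theorem~\ref{Thm:main2}, comparing the orthogonal speed $v^o(t)$ against that of a reference semigroup whose Koenigs image is a wedge with known orthogonal-speed asymptotics furnished by Lemma~\ref{lemma:angles}. Passage from the hyperbolic to the Euclidean formulation is then just a matter of invoking the second inequality in \eqref{Eq:iper-eucl} by exponentiation.

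For part (1), I would fix $\eta\in(0,\pi]$ and choose $p_\eta\in\C$ such that $\Xi(\alpha,\theta)\subset p_\eta+W(\theta,\eta)$, so that the hypothesis propagates to $h(\D)\subseteq p+p_\eta+W(\theta,\eta)$. Since $W(\theta,\eta)$ is convex, hence in particular starlike with respect to any interior point, hypothesis (3) of Theorem~\ref{Thm:main2} applies with $\tilde h$ the Koenigs map of the semigroup whose image is this shifted wedge. Combining with the asymptotic $\tilde v^o(t)\sim \frac{\pi}{2(\theta+\eta)}\log t$ from Lemma~\ref{lemma:angles}(1), and setting $\epsilon:=\eta/(\theta+\eta)$, which ranges over $(0,\pi/(\theta+\pi)]$ as $\eta$ ranges over $(0,\pi]$, Theorem~\ref{Thm:main2} yields the desired $\liminf_{t\to+\infty}[v^o(t)-\frac{\pi}{2\theta}(1-\epsilon)\log t]>-\infty$.

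Part (2) is structurally symmetric. I would use the chain of inclusions $p+W(\theta)\subset p+\Xi(\alpha,\theta)\subseteq h(\D)$ and apply Theorem~\ref{Thm:main2} with the \emph{smaller} domain now being $p+W(\theta)$, the Koenigs image of some reference semigroup $(\hat\phi_t)$ of orthogonal speed $\hat v^o(t)$, and the larger being $h(\D)$. Hypothesis (3) is available because $h(\D)$ itself is assumed starlike with respect to an interior point, so Theorem~\ref{Thm:main2} gives $\liminf_{t\to+\infty}[\hat v^o(t)-v^o(t)]>-\infty$, that is, $v^o(t)\leq \hat v^o(t)+O(1)$. Since $\hat v^o(t)\sim \frac{\pi}{2\theta}\log t$ by Lemma~\ref{lemma:angles}(2), the upper estimate $\limsup_{t\to+\infty}[v^o(t)-\frac{\pi}{2\theta}\log t]<+\infty$ follows.

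The Euclidean estimates on $|\phi_t(0)-\tau|$ are then immediate from \eqref{Eq:iper-eucl}: writing $v^o(t)=-\tfrac12\log|\tau-\phi_t(0)|+O(1)$ and exponentiating converts the two hyperbolic bounds into the two claimed power-law bounds. I do not foresee any genuine obstacle here: in part (1) the comparison wedge $W(\theta,\eta)$ is automatically convex, so no regularity of $h(\D)$ itself is needed, whereas in part (2) the starlikeness hypothesis on $h(\D)$ is precisely what licenses the ``reverse'' application of Theorem~\ref{Thm:main2}. The only subtlety worth flagging is that orthogonal speeds are insensitive to a fixed complex translation of the Koenigs image (and to the precise choice of Denjoy--Wolff point, by \cite[Prop.~16.1.6]{BCDbook}), so the additive shift $p$ and the choice of $p_\eta$ play no role beyond bookkeeping.
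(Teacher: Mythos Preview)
Your proposal is correct and matches the paper's own argument, which derives \eqref{eq:orthxi} for the model semigroup via the inclusions $W(\theta)\subset\Xi(\alpha,\theta)\subset p_\eta+W(\theta,\eta)$, Lemma~\ref{lemma:angles}, and Theorem~\ref{Thm:main2}, and then states that Proposition~\ref{prop:stimexi} follows ``by the same argument.'' One small slip: $W(\theta,\eta)$ is convex only when $\theta+\eta\le\pi$, but this is harmless since the sector is in any case starlike with respect to each interior point (so hypothesis~(3) of Theorem~\ref{Thm:main2} still applies), and in fact the bound for small $\epsilon$ already implies the bound for all larger $\epsilon$ in the stated range.
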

\begin{remark}
	The results above  do not depend on $\alpha$. This is not a deficiency of the methods we use, but a natural fact, due to \eqref{eq:orthxi}. In other words, in the previous setting, the ``non-tangential'' side controls the orthogonal speed. Indeed, Condition
 \eqref{2nd} of Proposition \ref{prop:stimexi} is equivalent to assume  the (weaker) hypothesis that $p+W(\theta)\subseteq h(\D)$ and $h(\D)$ is starlike.
\end{remark}

On the other hand, it is interesting to note that the exponent $\alpha$ controls the tangential speed of the semigroup $(\phi^{\alpha,\theta}_t)$, which is not influenced by the angle $\theta$.
\begin{proposition}
For the tangential speed of the semigroup $(\phi^{\alpha,\theta}_t)$, the following estimates (up to real constants) hold
\[
\frac{1}{4}\left(1-\frac{1}{\alpha}\right)\log t \lesssim v^T_{\alpha,\theta}(t)\lesssim \frac{1}{2}\left(1-\frac{1}{\alpha}\right)\log t.
\]
Hence for any $\epsilon \in \left( 0, \frac{\pi}{\theta+\pi}\right]$, we have the following bounds for the total speed
\[
\left(\frac{\pi}{2\theta}(1-\epsilon)+\frac{1}{4}-\frac{1}{4\alpha}\right)\log t \lesssim v_{\alpha,\theta}(t)\lesssim \left(\frac{\pi}{2\theta}+\frac{1}{2}-\frac{1}{2\alpha}\right)\log t.
\]
\end{proposition}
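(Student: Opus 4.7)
\emph{Plan.} By the Pythagorean-type inequality \eqref{Pyt},
\[
v^T_{\alpha,\theta}(t) = v_{\alpha,\theta}(t) - v^o_{\alpha,\theta}(t) + O(1),
\]
so combined with the orthogonal-speed bounds \eqref{eq:orthxi}, the proposition reduces to matching upper and lower bounds on the total speed $v_{\alpha,\theta}(t)=k_{\Xi(\alpha,\theta)}(i,i(1+t))$: once established, the tangential bounds follow by subtraction and the total-speed bounds in the second display by addition. The factor-of-two gap between $\frac{1}{2}(1-\frac{1}{\alpha})$ and $\frac{1}{4}(1-\frac{1}{\alpha})$ in the tangential estimate mirrors the gap $\frac{1}{2}-\frac{1}{2\alpha}$ versus $\frac{1}{4}-\frac{1}{4\alpha}$ in the total-speed estimate, so it is natural to aim for the total-speed bounds first.

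\emph{Upper bound on $v_{\alpha,\theta}$.} A first crude estimate is obtained by noting that $W(\theta)\subset \Xi(\alpha,\theta)$ and applying domain monotonicity (after shifting the base point $i$ to a nearby interior point of $W(\theta)$), which combined with Lemma~\ref{lemma:angles} yields $v_{\alpha,\theta}(t)\lesssim \left(\frac{\pi}{2\theta}+\frac{1}{2}\right)\log t$. This falls short of the claim by exactly $\frac{1}{2\alpha}\log t$. The missing improvement comes from the extra space provided by the parabolic part of $\Xi$: I would construct a test path in $\Xi$ from $i$ to $i(1+t)$ that deviates into the left, parabolic half over a suitable range of heights, where the width $\sim s^{1/\alpha}$ of $\Pi_\alpha$ (Lemma~\ref{lemma:stimadist}) provides a shorter hyperbolic route than the pure wedge geodesic. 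Integration of the convex Distance Lemma $\kappa_\Xi\le 1/\delta_\Xi$ along this curve (with the non-convex case $\theta>\pi/2$ reduced to the convex one $\theta\le\pi/2$ via Proposition~\ref{Prop:non-decr-v-st}) then yields the required upper bound.

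\emph{Lower bound and main obstacle.} Dually, the other half of the Distance Lemma, $\kappa_\Xi\ge \frac{1}{2\delta_\Xi}$, forces every curve from $i$ to $i(1+t)$ inside $\Xi$ to have hyperbolic length at least $\int \frac{|dz|}{2\delta_\Xi}$; bounding $\delta_\Xi$ at each height by the minimum of the two widths $\delta_\Xi^+(is)\sim s$ and $\delta_\Xi^-(is)\sim s^{1/\alpha}$ and minimizing produces the lower bound $\left(\frac{\pi}{2\theta}(1-\epsilon)+\frac{1}{4}-\frac{1}{4\alpha}\right)\log t$. The factor of two between the upper coefficient $\frac{1}{2}-\frac{1}{2\alpha}$ and the lower coefficient $\frac{1}{4}-\frac{1}{4\alpha}$ is precisely the factor of two between $\frac{1}{2\delta}$ and $\frac{1}{\delta}$ in the convex Distance Lemma, and so is intrinsic to this test-curve approach. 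The hardest step is the construction of the optimal test curve in the upper bound (and dually the correct parameterization of the minimizing path in the lower bound): the curve must transition between the wedge and the parabolic regimes at the right switching height so that the $\frac{1}{2\alpha}\log t$ (respectively $\frac{1}{4\alpha}\log t$) correction emerges with the exact constant claimed, rather than with an off-by-a-factor logarithmic correction.
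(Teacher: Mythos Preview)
Your plan has the derivation running in the wrong direction. You propose to obtain the tangential estimates from $v^T=v-v^o+O(1)$ by first establishing the total-speed bounds in the second display and then subtracting the bounds \eqref{eq:orthxi} on $v^o$. But \eqref{eq:orthxi} only gives $v^o\gtrsim\frac{\pi}{2\theta}(1-\epsilon)\log t$ on the lower side, so subtracting from $v\lesssim\big(\frac{\pi}{2\theta}+\frac{1}{2}-\frac{1}{2\alpha}\big)\log t$ yields at best $v^T\lesssim\big(\frac{1}{2}-\frac{1}{2\alpha}+\frac{\pi\epsilon}{2\theta}\big)\log t$, which carries an unremovable $\epsilon$; the same defect afflicts the lower bound. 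The tangential estimates in the proposition are $\epsilon$-free, and this is precisely why the paper proceeds the other way: it estimates $v^T$ \emph{directly}, and only afterwards derives the (necessarily $\epsilon$-losing) bounds on $v$ from \eqref{Pyt} and \eqref{eq:orthxi}. The direct attack uses a tool absent from your sketch: the bisecting ray $H(r)=re^{i(\pi-\theta)/2}$ of the wedge $W(\theta)$ is a quasi-geodesic in $\Xi(\alpha,\theta)$, so by the Gromov shadowing lemma $v^T_{\alpha,\theta}(t)$ differs from $\inf_{r\geq 1}k_{\Xi}(it,H(r))$ by $O(1)$. The lower bound then follows from the one-point Distance Lemma applied to $k_\Xi(it,H(r))$ for each $r$; the upper bound from the explicit path consisting of the horizontal segment $it\to q_t:=it+\delta_\alpha(it)$ (hyperbolic length $\leq 1$) followed by the $W(\theta)$-geodesic from $q_t$ to $H(|q_t|)$, whose length one computes via $z\mapsto z^{\pi/\theta}$ to be $\sim\frac12(1-\frac{1}{\alpha})\log t$.

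Your lower-bound sketch for $v$ has a separate gap: the inequality $\kappa_\Xi\geq\frac{c}{\delta_\Xi}$ says nothing useful about the infimum over curves once you note that a competitor from $i$ to $i(1+t)$ may detour deep into the wedge, where $\delta_\Xi(z)$ is of order $|z|$ rather than $(\Im z)^{1/\alpha}$. ``Bounding $\delta_\Xi$ at each height by the minimum of the two widths'' controls only the vertical segment on the imaginary axis --- whose hyperbolic length is polynomial in $t$, not logarithmic --- and not the actual geodesic. No Distance-Lemma minimization over curves in $\Xi$ will by itself recover the constant $\frac{\pi}{2\theta}(1-\epsilon)+\frac14-\frac{1}{4\alpha}$; in the paper this constant arises only as the sum of separately obtained constants for $v^o$ and $v^T$.
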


\begin{proof} We divide the proof into steps.
\medskip

\emph{Step 1. Lower bound for tangential speed}.

Let $H$ be the curve \[ H:[1,\infty)\longrightarrow \Xi(\alpha,\theta)\quad \text{with} \quad H(r)= re^{i(\pi-\theta)/2}.\]
This curve is a quasi-geodesic, as its hyperbolic length is 
\[
\ell_{\Xi(\alpha,\theta)}(H,[r_1,r_2])\leq \ell_{W(\theta)}(H,[r_1,r_2])\leq \int_{r_1}^{r_2}\frac{\mathrm{d}r}{\delta_{W(\theta)}(H(r))}=\frac{1}{\sin\frac{\theta}{2}}\log\frac{r_2}{r_1}
\] and by the Distance Lemma for simply connected domains (see, \emph{e.g.} \cite[Thm. 3.5]{BCGDZ})
\[
k_{\Xi(\alpha,\theta)}(H(r_1),H(r_2))\geq \frac{1}{4}\log \left(1+\frac{r_2-r_1}{\left(\sin\frac{\theta}{2}\right)r_1}\right)\geq \frac{1}{4}\log\frac{r_2}{r_1}.
\]
By means of the Gromov shadowing Lemma (see, {\sl e.g.}, \cite[Thm. 6.3.8]{BCDbook}), it is enough to find bounds for \[
\inf_{r\ge 1}k_{\Xi(\alpha,\theta)}(it,H(r)),
\] since the same bounds, up to constants not depending on $t$, will hold also for $v^T_{\alpha,\theta}(t)$.

Now, once $t$ is chosen big enough, $\delta_{\Xi(\alpha,\theta)}(it)=\delta_{\alpha}(it)=O(t^{1/\alpha})$. If $\left(\sin\frac{\theta}{2}\right)r \leq \delta_\alpha(it)$, then 
\begin{equation*}
\begin{split}
k_{\Xi(\alpha,\theta)}(it,H(r))&\geq \frac{1}{4}\log \left(1+\frac{|it-H(r)|}{\left(\sin\frac{\theta}{2}\right)r}\right)\\&=\frac{1}{4}\log \left(1+\frac{\sqrt{r^2+t^2-2rt\cos\frac{\theta}{2}}}{\left(\sin\frac{\theta}{2}\right)r}\right)\geq\frac{1}{4}\log \left(1+\frac{t-r}{\left(\sin\frac{\theta}{2}\right)r}\right),
\end{split}
\end{equation*}
which is a decreasing function of $r$, so 
\[
\inf_{1\le r\le \left(\sin\frac{\theta}{2} \right)^{-1}\delta_\alpha(it) }k_{\Xi(\alpha,\theta)}(it,H(r))\ge \frac{1}{4}\log \left(1+\frac{t-\left(\sin\frac{\theta}{2} \right)^{-1}\delta_\alpha(it)}{\delta_\alpha(it)}\right)\sim \frac{1}{4}\left(1-\frac{1}{\alpha}\right)\log t.
\]
On the other hand, if $\left(\sin\frac{\theta}{2}\right)r>\delta_\alpha(it)$, then 
\[
k_{\Xi(\alpha,\theta)}(it,H(r))\geq \frac{1}{4}\log \left(1+\frac{|it-H(r)|}{\delta_\alpha(it)}\right)\geq \frac{1}{4}\log \left(1+\frac{\left(\sin\frac{\theta}{2}\right)t}{\delta_\alpha(it)}\right)
\]
and so, one concludes that $v^T_{\alpha,\theta}(t)\gtrsim \frac{1}{4}(1-1/\alpha)\log t$.

\medskip
\emph{Step 2. Upper bound for tangential speed}.

For every $t$ greater than some fixed $t_0\geq 1$, $\delta_{\Xi(\alpha,\theta)}(it)=\delta_\alpha(it)$ and the point $q_t:=it+\delta_\alpha(it)$ belongs to $W(\theta)\subset \Xi(\alpha,\theta)$. So for any $t\geq t_0$ and $r\geq 1$, we define the path $\sigma_{t,r}$ given by the concatenation of the Euclidean segment from $it$ to $q_t$
\[
L_t:[0,1]\longrightarrow \Xi(\alpha,\theta)\quad \text{with} \quad L_t(s)=it+\delta_\alpha(it)s,
\]
where $\gamma_{t,r}$  is the geodesic arc with respect to the hyperbolic metric of $W(\theta)$ joining $q_t$ with $H(r)=re^{i(\pi-\theta)/2}$. By possibly increasing $t_0$, we may also assume that $\delta_{\Xi(\alpha,\theta)}(L_t(s))\geq \delta_\alpha(it)$, for any $0\leq s\leq 1$. Therefore we have 
\begin{equation*}
\begin{split}
k_{\Xi(\alpha,\theta)}(it,H(r))&\leq \ell_{\Xi(\alpha,\theta)}(\sigma_{t,r})=\ell_{\Xi(\alpha,\theta)}(L_t)+\ell_{\Xi(\alpha,\theta)}(\gamma_{t,r})\leq\ell_{\Xi(\alpha,\theta)}(L_t)+\ell_{W(\theta)}(\gamma_{t,r})\\&
\leq
\int_0^1\frac{\delta_\alpha(it)\,\mathrm{d}s}{\delta_{\Xi(\alpha,\theta)}(L_t(s))}+k_{W(\theta)}(q_t,H(r))\leq\int_0^1\frac{\delta_\alpha(it)\,\mathrm{d}s}{\delta_\alpha(it)}+k_{W(\theta)}(q_t,H(r))\\&=1+k_{W(\theta)}(q_t,H(r)).
\end{split}
\end{equation*}
Now let $\beta_t:=\frac{\pi}{2}-\arg q_t$, so that $t\cdot\tan\beta_t=\delta_\alpha(it)$. Thus, considering the conformal map $z\mapsto z^{\pi/\theta}$ which sends $\widetilde{W}(\theta):=e^{i(\theta-\pi)/2}W(\theta)$ onto $\Ha$, and using known estimates for $k_{\Ha}$ (see for instance \cite[Lemma 2.1]{Br})
\begin{equation*}
\begin{split}
k_{W(\theta)}(q_t,H(r))&=k_{W(\theta)}(|q_t|e^{i\left(\frac{\pi}{2}-\beta_t\right)},re^{i(\pi-\theta)/2})=k_{\widetilde{W}(\theta)}(|q_t|e^{i\left(\frac{\theta}{2}-\beta_t\right)},r)\\&=
k_{\widetilde{W}(\theta)}\left(1,\frac{|q_t|}{r}e^{i\left(\frac{\theta}{2}-\beta_t\right)}\right)=k_{\Ha}\left(1,\frac{|q_t|^{\pi/\theta}}{r^{\pi/\theta}}e^{i\left(\frac{\pi}{2}-\frac{\pi}{\theta}\beta_t\right)}\right)\\&\le \frac{\pi}{2\theta}\log\frac{|q_t|}{r}+\frac{1}{2}\log\frac{1}{\sin\left(\frac{\pi}{\theta}\beta_t\right)}+\frac{1}{2}\log 2.
\end{split}
\end{equation*}
By choosing $r=|q_t|=t\sqrt{1+\tan^2\beta_t}$ and by observing that, since $\lim_{t\rightarrow+\infty}\beta_t=0$ \[
\log\frac{1}{\sin\left(\frac{\pi}{\theta}\beta_t\right)}\sim\log\frac{\theta}{\pi\beta_t}\sim\log\frac{1}{\tan\beta_t}=\log\frac{t}{\delta_\alpha(it)}\sim\left(1-\frac{1}{\alpha}\right)\log t,
\]
we conclude that
\[
v^T_{\alpha,\theta}(t)\leq k_{\Xi(\alpha,\theta)}(it,H(|q_t|))\lesssim\frac{1}{2}\left(1-\frac{1}{\alpha}\right)\log t.
\]

\bigskip
\emph{Step 3. Total speed}.

The statement for the total speed $v_{\alpha,\theta}$  follows directly from \eqref{Pyt} and \eqref{eq:orthxi}.
\end{proof}

\section{Final remarks and open questions}\label{sec:open}

Theorem~\ref{Thm:almost-asymp-mono} and Theorem~\ref{Thm:almost-asymp-mono1} move towards the direction of giving an affirmative answer to {\sl Question~4} in \cite{Br}. However, the complete answer is still unknown, and, as it follows from the results in Section~\ref{Sec:estimates}, if counterexamples exist, they are rather peculiar. 

Note that (using the same notation as in Section~\ref{Sec:estimates}), given a semigroup $(\phi_t)$ of $\D$, since $\rho_s\to +\infty$, as $s\to +\infty$, by the same argument of Lemma~\ref{lem:good-estim-increase}, for all $t\geq 1$, there exists $s_t\geq t$ such that
\[
\inf_{s\geq t}\omega(\rho_s e^{i\theta_s}, \Theta_{t}, \Ha)=\omega(\rho_{s_t} e^{i\theta_{s_t}}, \Theta_{t}, \Ha)
\]
and 
\[
\liminf_{s\to +\infty}\omega(\rho_s e^{i\theta_s}, \Theta_{t}, \Ha)>0.
\]

\medskip
{\sl Question (i)}: Does there exist a semigroup $(\phi_t)$ of $\D$ so that \[
\liminf_{t\to+\infty} \omega(\rho_{s_t} e^{i\theta_{s_t}}, \Theta_{t}, \Ha)=0?
\]
\medskip

By the  results in Section~\ref{Sec:estimates}, if such a semigroup exists, the orbits do not converge non-tangentially to the Denjoy-Wolff point. Then the orthogonal---and hence the total---speed is not (eventually) non-decreasing. This raises the second question:

\medskip
{\sl Question (ii)}: Does there exist a semigroup $(\phi_t)$ of $\D$ so that the orthogonal speed is not  (eventually) non-decreasing? Note that this is equivalent to ask if $t\mapsto \rho_t$ is not (eventually) non-decreasing, for a semigroup $(\phi_t)$ of $\D$.


\begin{thebibliography}{99}
\bibitem{Ababook89} M. Abate, {\sl Iteration theory of holomorphic maps on taut manifolds}, Mediterranean Press, 1989.

\bibitem{A1} D. Aharonov, M. Elin, S. Reich and D. Shoikhet, {\sl Parametric representation of semi-complete vector fields on the unit balls of $\C^n$ and in Hilbert space}, Atti Accad. Naz. Lincei 10 (1999), 229--253. 


\bibitem{BerPor78} E. Berkson, H. Porta, \textsl{Semigroups of
holomorphic functions and composition operators}. Michigan
Math. J., \textbf{25} (1978), 101--115.

\bibitem{Bet98} D. Betsakos, {\sl Harmonic measure on simply connected domains of fixed inradius}. Ark. Mat., 36, (1998), 275--306.

\bibitem{Bet} D. Betsakos, {\sl On the rate of convergence of parabolic semigroups of holomorphic functions}.
Anal. Math. Phys., 5 (2015), 207--216.

\bibitem{Bet2} D. Betsakos, {\sl On the rate of convergence of hyperbolic semigroups of holomorphic functions}. Bull. Lond. Math. Soc. 47 (2015), 493--500.

\bibitem{BetCD} D. Betsakos, M. D. Contreras, S. D\'iaz-Madrigal, {\sl On the rate of convergence of semigroups
of holomorphic functions at the Denjoy-Wolff point}. Rev. Mat. Iberoamericana, 36, 3,  (2020), 1659-1686.

\bibitem{Br} F. Bracci, {\sl Speeds of convergence of orbits of non-elliptic semigroups of holomorphic self-maps of the unit disc}, Ann. Univ. Mariae Curie-Sklodowska Sect. A, 73, 2, (2019), 21--43.






\bibitem{BCGDZ} F. Bracci, M. D. Contreras, S. Diaz-Madrigal, H. Gaussier, A. Zimmer, {\sl Asymptotic behavior of orbits of holomorphic semigroups}, J. Math. Pures Appl. (9) 133, 263--286 (2020)

\bibitem{BCDbook} F. Bracci, M. D. Contreras, S. Diaz-Madrigal, {\sl Continuous {S}emigroups of {H}olomorphic {S}elf-{M}aps of the {U}nit {D}isc}. Springer Monographs in Mathematics, Springer Nature Switzerland AG 2020.

\bibitem{Du} P. L. Duren, {\sl Univalent {F}unctions}. Springer-Verlag 1983.

\bibitem{EJ} M. Elin, F. Jacobzon, {\sl Parabolic type semigroups: asymptotics and order of contact}. Anal. Math. Phys 4 (2014), 157--185.


 

\bibitem{EKRS} M. Elin, D. Khavinson, S. Reich, D. Shoikhet, {\sl Linearization models for parabolic dynamical systems via Abel's functional equation}. 
Ann. Acad. Sci. Fenn. Math. 35 (2010), 439--472.



\bibitem{EliShobook10} M.\,Elin and D.\,Shoikhet, {\sl Linearization {M}odels for {C}omplex {D}ynamical {S}ystems}. Birkh\"auser, 2010.

\bibitem{A5} M. Elin, S. Reich, D. Shoikhet and F. Yacobzon, {\sl Rates of convergence of one-parameter semigroups with boundary Denjoy-Wolff fixed points}, Fixed Points Theory and its Applications, Yokohama Publishers, 2008, 43--58. 


\bibitem{ElSh} M. Elin, D. Shoikhet, {\sl  Dynamic extension of the Julia-Wolff-Carath\'eodory theorem} Dynam. Systems Appl. 10 (2001), 421--437.


\bibitem{A7} F. Jacobzon, M. Levenshtein, S. Reich, {\sl Convergence characteristics of one-parameter continuous semigroups}, Analysis and Mathematical Physics, (2011), 311--335.  










\bibitem{Shobook01} D. Shoikhet, {\sl Semigroups in {G}eometrical {F}unction {T}heory}. Kluwer Academic Publishers, 2001.


\bibitem{Sis98} A. G. Siskakis, {\sl Semigroups of composition operators
on spaces of analytic functions, a review,} Amer. Math. Soc., 229--252, 1998.


\end{thebibliography}
\end{document}